\newcommand{\und}{\wedge}
\newcommand{\vx}{{\mathbf x}}
\newcommand{\vz}{{\mathbf z}}
\newcommand{\vw}{{\mathbf w}}
\newcommand{\vn}{{\boldsymbol{\nu}}}
\newcommand{\Bildeinbinden}[2] {\setlength{\epsfxsize}{#1}\epsfbox{#2}}
\newtheorem{theorem}{Theorem}[section]
\newtheorem{lemma}{Lemma}[section]
 \numberwithin{equation}{section}
\begin{document}

\author{Steffen Goebbels\footnote{Niederrhein University of Applied Sciences, Faculty of Electrical Engineering
and Computer Science, Institute for Pattern
Recognition, D-47805 Krefeld, Germany, steffen.goebbels@hsnr.de}}

\title{On Sharpness of Error Bounds for Multivariate Neural Network
Approximation}

\maketitle
\begin{center}
Third uploaded version, 23.11.2020. This preprint has been accepted by
Ricerche di Mathmatica.
\end{center}

\begin{abstract}
Single hidden layer feedforward neural networks can represent multivariate
functions that are sums of ridge functions. These ridge functions are defined
via an activation function and customizable weights.
The paper deals with best non-linear approximation by such sums of ridge
functions. Error bounds are presented in terms of moduli of smoothness. But the
main focus is on proving that the bounds are best possible. To this end,
counterexamples are constructed with a non-linear, quantitative extension of the
uniform boundedness principle. They show sharpness with respect to
Lipschitz classes for the logistic activation function and for certain piecewise
polynomial activation functions. The paper is based on univariate
results in (Goebbels, S.: On sharpness of error bounds
for univariate approximation by single hidden layer feedforward neural networks. Results Math., accepted for
publication; http://arxiv.org/abs/1811.05199).
\end{abstract}

\keywords{Neural Networks, Rates of Convergence, Sharpness of Error Bounds,
Counterexamples, Uniform Boundedness Principle}\\[1em]
\noindent{\bf AMS Subject Classification 2010:} 41A25, 41A50, 62M45

\section{Introduction} 
A feedforward neural network with an activation function
$\sigma:\mathbb{R}\to\mathbb{R}$, $d$ input nodes, one output node, and one
hidden layer of $n$ neurons implements a multivariate real-valued function $g :\mathbb{R}^d\to\mathbb{R}$ of type 
\begin{alignat}{1}
g(\vx) \in{\cal M}_n &:= {\cal M}_{n,\sigma}:=\left\{  \sum_{k=1}^n a_k
\sigma(\vw_k \cdot \vx+c_k) : a_k, c_k\in\mathbb{R}, \vw_k\in\mathbb{R}^d 
\right\},\label{network}
\end{alignat}
see Figure \ref{fignet}.
For vectors $\vw_k =(w_{k,1},\dots, w_{k,d})$ and
$\vx=(x_1,\dots,x_d)$,
$$\vw_k\cdot\vx =
\sum_{j=1}^d w_{k,j} x_{j}$$ 
is the standard inner product of $\vw_k,\,\vx\in \mathbb{R}^d$. Summands $\sigma(\vw_k
\cdot \vx+c_k)$ are ridge functions. They are constant on hyperplanes $\vw_k
\cdot \vx= c$, $c\in\mathbb{R}$.
\begin{figure}
\begin{center}
\Bildeinbinden{0.5\columnwidth}{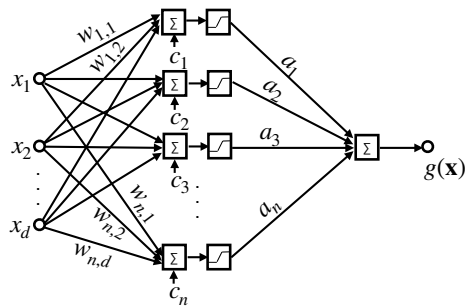}
\end{center}
\caption{One hidden layer neural network realizing $\sum_{k=1}^n a_k
\sigma(\vw_k \cdot \vx+c_k)$\label{fignet}}
\end{figure}

For non-constant bounded monotonically increasing and continuous activation
functions $\sigma$ the universal approximation property holds, see the paper of
Funahashi \cite{Funahashi89}: 
Given a continuous function $f$ on a compact set then for
each $\varepsilon>0$ there exists $n\in\mathbb{N}$ and a function $g_n\in {\cal
M}_n$ such that the sup-norm of $f-g_n$ is less than $\varepsilon$. Cybenko
showed this property in \cite{Cybenko89} for a different class of continuous
activation functions that do not have to be monotone.
Leshno et al.~proved for 
continuous activation functions $\sigma$ in \cite{Leshno93}
(cf.~\cite{Chen95}) that the universal approximation property is equivalent
to $\sigma$ being not an algebraic polynomial. Continuity is not a
necessary prerequisite for the universal approximation property, see
\cite{Jones90}.

We discuss error bounds for best approximation by functions of
${\cal M}_n$ in terms of moduli of smoothness for an arbitrary number $d$ of
input nodes in Section \ref{secdirect}. These results are quantitative
extensions of the qualitative universal approximation property. They introduce
convergence orders that depend on the smoothness of functions to be
approximated.

Many papers deal with the univariate case $d=1$. In \cite{Chen93}, Debao proved
an estimate against a first oder modulus for general sigmoid activation
functions. An overview of other estimates against first order moduli is given in
doctoral thesis \cite{Costarelli14}, cf.\ \cite{COSTARELLI2013101}.
Under additional assumptions on activation functions, estimates against higher
order moduli are possible. For example, one can easily extend the first order
estimate of Ritter for approximation with ``nearly exponential'' activation functions in
\cite{Ritter99} to higher moduli, see \cite{Goebbels20}. 
Similar results can be obtained for activation
functions that are arbitrarily often differentiable on some open interval such that
they are not an algebraic polynomial on that interval, see \cite[Theorem 6.8,
p.~176]{pinkus_1999} in combination with \cite{Goebbels20}.

With respect to the general multivariate case, Barron applied Fourier methods
in \cite{Barron93} to establish a convergence rate for a
certain class of smooth functions in the $L^2$-norm. 
Approximation errors for multi-dimensional bell shaped activation functions were
estimated by first order moduli of smoothness or related Lipschitz classes by
Anastassiou (e.g.~\cite{Anastassiou00}) 
and Costarelli and Spigler (see e.g.~\cite{Costarelli13}
including a literature overview). However, discussed neural network spaces
differ from (\ref{network}). They do not consist of linear combinations of ridge
functions.
A special network with four layers is
introduced in \cite{LIN20146031} to obtain a Jackson estimate in terms of a
first order modulus of smoothness.

Maiorov and Ratsby establish an upper bound for functions in
Sobolev spaces based on pseudo-dimension in \cite[Theorem 2]{Ratsaby99}.
Pseudo-dimension is an upper bound of 
the Vapnik-Chervonenkis dimension (VC dimension) that will also be used in this
paper to obtain lower bounds.

With respect to neural network spaces (\ref{network}) of ridge functions, we
apply results of Pinkus
\cite{pinkus_1999} and Maiorov and Meir \cite{Maiorov99}
to
obtain error bounds for a
large class of activation functions either based on K-functional techniques
or on known estimates for best approximation with multivariate polynomials in
Section \ref{secdirect}.
Both $L^p$- and sup-norms are considered.

In Section \ref{secount}, we prove for the logistic
activation function that counterexamples
$f_\alpha$ exist for all $\alpha>0$ such that sup-norm as well as
$L^p$-norm bounds are in $O(n^{-\alpha})$ but the error of best
approximation is not in $O(n^{-\beta})$ for $\beta > \alpha$.
This result is a multivariate extension of univariate counterexamples
($d=1$, one single input node, sup-norm) in \cite{Goebbels20}. 
A similar result is shown for piecewise polynomial activation functions with
respect to an $L^2$-norm bound.

In fact, the 
non-linear
variant of a quantitative uniform boundedness principle in \cite{Goebbels20}
can be applied to construct univariate and multivariate counterexamples.
This principle is based on theorems of Dickmeis, Nessel and van Wickeren,
cf.~\cite{DiNeWi2}, that can be used to analyze error bounds of linear
approximation processes. Its application, both in a linear and in the given
non-linear context, requires the construction of a resonance sequence. To this
end, a known result \cite{Bartlett1996} on the 
VC dimension of networks with logistic activation is used. Theorem \ref{thVC}
in Section \ref{secount} is formulated as a general means to derive
discussed counterexamples from VC dimension estimates. Also, \cite{Maiorov99}
already provides sequences of counterexamples that can be condensed to a single
counterexample with the uniform boundedness principle.

There are some published attempts to show sharpness of error bounds for neural
network approximation in terms of moduli of smoothness based on inverse
theorems. Inverse and equivalence theorems estimate the values of moduli of
smoothness by approximation rates. For example, they determine membership to
certain Lipschitz classes from known approximation errors. 
However, the letter \cite{Goebbels19} proves 
that the inverse theorem for neural network approximation in \cite{Wang10} as
well as the inverse theorems in some related papers are wrong. Smoothness is one
feature that favors high approximation rates. But in this non-linear
situation, other features (e.g. the ``nearly exponential'' property or
similarity to certain derivatives of the activation function,
cf.~\cite{Kurkova98}) also contribute to convergence rates.
Such features cannot be sufficiently measured by moduli of smoothness,
cf.~sequence of counterexamples in \cite{Goebbels19}.
This is the motivation to work with counterexamples instead of inverse or 
equivalence theorems in Section \ref{secount}.

\section{Notation and Direct Estimates}\label{secdirect}
Let $\Omega\subset\mathbb{R}^d$ be an open set.
By $X^p(\Omega):=L^p(\Omega)$ with norm 
$$\|f\|_{L^p(\Omega)}:=\root p \of
{\int_\Omega |f(\vx)|^p d\vx}$$ for $1\leq p<\infty$ and
$X^\infty(\Omega):=C(\overline{\Omega})$ with sup-norm
$\|f\|_{C(\overline{\Omega})}:=\sup\{|f(\vx)| : \vx\in\overline{\Omega}\}$ 
we denote the usual Banach spaces.

For a multi-index $\alpha=(\alpha_1,\dots,\alpha_d)\in\mathbb{N}_0^d$ with
non-negative integer components, let $|\alpha|:= \sum_{j=1}^d \alpha_j$
be its sum of components. We write
$\vx^\alpha := \prod_{j=1}^d x_j^{\alpha_j}$.
With $P_k$ we denote the set of
multivariate polynomials with degree at most $k$, i.e., each polynomial in $P_k$
is a linear combination of homogeneous polynomials of degree
$l\in\{0,\dots,k\}$. To this end, let $$H_l:= \left\{ f:\mathbb{R}^d
\to\mathbb{R} :
f(\vx)= \sum_{\alpha \in \mathbb{N}_0^d,\,|\alpha|=l} c_\alpha
\vx^\alpha\right\}
$$
be the space of homogeneous polynomials of degree $l$.

The set of all univariate polynomials with degree at most $k$ is denoted by
$\Pi_k$, i.e., $\Pi_k=P_k$ for $d=1$.
Let $$s:=\dim H_k = \binom{d+k-1}{k} \leq (k+1)^{d-1}.$$
To obtain the upper estimate, we choose exponents
$\alpha_1,\dots,\alpha_{d-1}$ in $\vx^\alpha$ independently from
the set $\{0,\dots,k\}$.
If the sum of these exponents does not exceed $k$ then
$\alpha_{d}=k-\sum_{j=1}^{d-1} \alpha_j$.
Otherwise, we have counted a polynomial with degree greater than $k$. Thus, the
estimate only is a coarse upper bound.

Multivariate polynomials can be represented by
univariate polynomials, cf.~\cite[p.~164]{pinkus_1999}: For a given degree
$k\in\mathbb{N}$ there exist $s\leq (k+1)^{d-1}$ vectors $\vw_1,\dots,\vw_s
\in\mathbb{R}^d$
such that
\begin{equation}
 P_k=\left\{\sum_{j=1}^s p_j(\vw_j\cdot\vx) : p_j\in \Pi_k
 \right\}.\label{poldicht}
\end{equation}
We use the result \cite[p.~176]{pinkus_1999}, cf.\ \cite{Ito}:
Let $\sigma:\mathbb{R}\to\mathbb{R}$ be arbitrarily often differentiable
on some open interval $I\subset\mathbb{R}$, i.e.~$\sigma\in C^\infty(I)$, 
and let $\sigma$ be no algebraic polynomial on that interval. Then univariate polynomials of degree at most $k$ can be uniformly
approximated arbitrarily well on compact sets by choosing parameters
$a_j,b_j,c_j\in\mathbb{R}$ in $$\sum_{j=1}^{k+1} a_j\sigma(b_j x + c_j).$$ 
Thus due to (\ref{poldicht}),
also multivariate polynomials of degree at most $k$ can be approximated by
functions of ${\cal M}_{s(k+1)}$ arbitrarily well on compact sets, i.e., in
the sup-norm
\begin{equation}
P_k \subset \overline{{\cal M}_{s(k+1)}}.\label{dense}
\end{equation}
Theorem 3.1 in \cite{Kurkova98} even describes a more general class of
multivariate functions that can be approximated arbitrarily well like
polynomials.

There holds following lemma from
\cite[Proposition 4]{Ito} that extends (\ref{dense}) to simultaneous
approximation.
\begin{lemma}\label{LaIto}
Let $\sigma :\mathbb{R}\to\mathbb{R}$ be arbitrarily often differentiable on an
open interval around the origin with $\sigma^{(i)}(0)\neq 0$,
$i\in\mathbb{N}_0$.
Then for any polynomial $\pi\in P_k$ of degree at most $k$, any compact set
$I\subset\mathbb{R}^d$, and each $\varepsilon>0$ there exists a sufficiently
often differentiable function $g\in {\cal M}_{s(k+1)}$ such that
simultaneously for all $\alpha\in\mathbb{N}_0^d$, $|\alpha|\leq k$, 
$$ \left\| \frac{\partial^{|\alpha|}}{\partial x_1^{\alpha_1} \dots \partial
x_d^{\alpha_d}} \left(\pi(\vx) - g(\vx)\right)\right\|_{C(I)} <
\varepsilon.$$
\end{lemma}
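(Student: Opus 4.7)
The plan is a two-stage reduction: first solve the univariate case by extracting monomials together with their derivatives from shifted activations via a Vandermonde inversion, then lift to the multivariate setting via representation~(\ref{poldicht}).

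For the univariate core, fix $k+1$ distinct nonzero scales $\tilde h_0,\dots,\tilde h_k$ and, for a small parameter $h>0$, set $h_j := h\,\tilde h_j$. Since $\sigma\in C^\infty$ in a neighbourhood of $0$, Taylor expansion gives
\[
\sigma(h_j x) \;=\; \sum_{i=0}^{k}\frac{\sigma^{(i)}(0)}{i!}(h_j x)^{i} \;+\; O(h_j^{k+1})
\]
uniformly on any bounded interval. The Vandermonde matrix $(h_j^i)_{j,i=0}^{k}$ factors as $\tilde V\,\mathrm{diag}(h^0,\dots,h^k)$ with $\tilde V$ fixed and invertible, so its inverse supplies coefficients $\alpha^{(i)}_j(h)$ satisfying $\sum_{j=0}^{k}\alpha^{(i)}_j(h)\,h_j^{p}=\delta_{i,p}$ for $0\le p\le k$, with the scaling bound $\alpha^{(i)}_j(h)=O(h^{-i})$. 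Setting $G_{i,h}(x):=\sum_{j=0}^{k}\alpha^{(i)}_j(h)\,\sigma(h_j x)$, the Vandermonde identity combined with the Taylor remainder estimate shows $G_{i,h}(x)\to \tfrac{\sigma^{(i)}(0)}{i!}\,x^{i}$ uniformly on compact sets.

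The key enhancement is that the very same argument covers derivatives. Differentiating $\ell$ times and Taylor-expanding $\sigma^{(\ell)}$ around $0$,
\[
G_{i,h}^{(\ell)}(x) \;=\; \sum_{m=0}^{k-\ell}\frac{\sigma^{(\ell+m)}(0)}{m!}\,x^{m}\underbrace{\sum_{j=0}^{k}\alpha^{(i)}_j(h)\,h_j^{\ell+m}}_{=\,\delta_{i,\ell+m}} \;+\; R_{i,\ell}(x,h),
\]
where $|R_{i,\ell}(x,h)|=O(h^{-i})\cdot O(h^{\ell})\cdot O(h^{k-\ell+1})=O(h^{k+1-i})$. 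The main term collapses to $\tfrac{\sigma^{(i)}(0)}{(i-\ell)!}x^{i-\ell}$ for $\ell\le i$ and to $0$ for $\ell>i$, which is exactly the $\ell$-th derivative of $\tfrac{\sigma^{(i)}(0)}{i!}x^{i}$. Because $\sigma^{(i)}(0)\neq 0$ for every $i$, any $p(x)=\sum_{i=0}^{k} c_i x^{i}\in\Pi_k$ is approximated together with all derivatives up to order $k$ by the $(k+1)$-term combination
\[
G_{p,h}(x)\;:=\;\sum_{i=0}^{k}\frac{c_i\,i!}{\sigma^{(i)}(0)}\,G_{i,h}(x)\;=\;\sum_{j=0}^{k}\beta_j(h)\,\sigma(h_j x).
\]

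For the multivariate step, apply (\ref{poldicht}) to write $\pi(\vx)=\sum_{j=1}^{s} p_j(\vw_j\cdot \vx)$ with $p_j\in\Pi_k$, and define
\[
g(\vx)\;:=\;\sum_{j=1}^{s} G_{p_j,h}(\vw_j\cdot \vx),
\]
a sum of $s(k+1)$ ridge functions of the required form. For $h$ small enough, all arguments $h_l\,\vw_j\cdot\vx$ remain in the open interval where $\sigma\in C^\infty$, so $g$ is arbitrarily smooth. The chain rule gives
\[
\frac{\partial^{|\alpha|}}{\partial x_1^{\alpha_1}\cdots\partial x_d^{\alpha_d}} g(\vx) \;=\; \sum_{j=1}^{s} \vw_j^{\alpha}\,G_{p_j,h}^{(|\alpha|)}(\vw_j\cdot\vx),
\]
which converges uniformly on $I$ to $\sum_j \vw_j^{\alpha}\,p_j^{(|\alpha|)}(\vw_j\cdot\vx)=\partial^{\alpha}\pi(\vx)$ for every $|\alpha|\le k$. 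Choosing $h$ sufficiently small secures the $\varepsilon$-bound simultaneously for all such $\alpha$. The principal obstacle is bookkeeping in the Vandermonde/Taylor estimate: one must balance the blow-up $\alpha^{(i)}_j(h)=O(h^{-i})$ against the vanishing remainder so that \emph{every} derivative up to order $k$ converges, and the hypothesis $\sigma^{(i)}(0)\neq 0$ for all $i\in\mathbb{N}_0$ is precisely what licenses the monomial extraction.
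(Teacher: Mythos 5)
Your argument is correct, and it is worth noting that the paper itself offers no proof of this lemma: it is quoted verbatim from the literature (Proposition~4 of the cited work of Ito), so there is no in-paper proof to compare against. What you supply is a valid self-contained derivation along the classical lines underlying that reference and Pinkus's Theorem~6.8: the monomials $x^i$ arise as $\partial_b^i\,\sigma(bx)\big|_{b=0}=\sigma^{(i)}(0)x^i$, and your Vandermonde inversion at the nodes $h_j=h\tilde h_j$ is exactly the divided-difference approximation of these $b$-derivatives. The two delicate points are both handled: the balance $\alpha^{(i)}_j(h)=O(h^{-i})$ against the Taylor remainder $O(h^{k+1})$ (respectively $O(h^{\ell})\cdot O(h^{k-\ell+1})$ after differentiating $\ell$ times) yields $O(h^{k+1-i})\to 0$ for every $i\le k$ and every $\ell\le k$, and the reduction to ridge directions via (\ref{poldicht}) together with the chain-rule identity $\partial^{\alpha}\bigl[p(\vw\cdot\vx)\bigr]=\vw^{\alpha}p^{(|\alpha|)}(\vw\cdot\vx)$ gives the multivariate statement with the correct count of $s(k+1)$ terms. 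Two small points you should make explicit: the constants in the Taylor remainders require $\sigma^{(k+1)}$ to be bounded on a compact subinterval of the smoothness interval containing all arguments $h_j(\vw\cdot\vx)$, which holds for $h$ small since $I$ is compact; and the resulting $g$ is guaranteed smooth only on a neighbourhood of $I$ (not on all of $\mathbb{R}^d$), which is all the lemma's $C(I)$-norms require. Note also that your construction only uses $\sigma^{(i)}(0)\neq 0$ for $i\le k$, slightly less than the stated hypothesis.
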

The requirement that derivatives at zero must not be zero can be replaced by the
requirement that $\sigma$ is no algebraic polynomial on the open interval, see
\cite{Ito}.

With $n$ summands of the activation function, polynomials of degree $k$ and
their derivatives can be simultaneously approximated arbitrarily well for such
values of $k$ that fulfill 
$n\geq (k+1)^d \text{, i.e.~} k \leq \root d \of n -1$, because
$(k+1)^d = (k+1)^{d-1} (k+1) \geq s(k+1)$.
Especially, polynomials of degree at most
\begin{equation}
k := \lfloor \root d \of n \rfloor -1\label{umrechnung}
\end{equation}
can be approximated arbitrarily well.

Let
$f\in X^p(\Omega)$,
$\vn\in\mathbb{R}^d$, $r\in\mathbb{N}_0$ und $t\in\mathbb{R}$.
The $r$th radial difference (with direction $\vn$) is given via
\begin{equation*}
 \Delta_\vn^r f(\vx) := \sum_{j=0}^r (-1)^{r-j} \binom{r}{j} f(\vx+j\vn)
\end{equation*}
(if defined).
Thus, $\Delta_\vn^r f=\Delta_\vn^{r-1}\Delta_\vn^1 f
=\Delta_\vn^1 \Delta_\vn^{r-1} f$.
Let
$$
 \Omega(\vn) := \{\vx\in\Omega : \vx+t \vn\in\Omega,\,
  0\leq t\leq 1\}.
$$
Then the $r$th radial modulus of smoothness of a function
$f\in L^p(\Omega)$,
$1\leq p<\infty$, or $f\in C(\overline{\Omega})$ is defined via
\begin{alignat*}{1}
\omega_r(f,\delta)_{p,\Omega}&:=\sup\{ \|\Delta_\vn^r f
   \|_{X^p(\Omega(r\vn))} : \vn\in\mathbb{R}^d,\, |\vn|\leq\delta\}.
\end{alignat*}

Our aim is to discuss errors $E$ of best approximation. For $S\subset
X^p(\Omega)$ and $f\in X^p(\Omega)$ let
$$ E(S, f)_{p,\Omega} := \inf \{ \|f-g\|_{X^p(\Omega)} : g\in S\}.
$$
Thus, $E(S, f)_{p,\Omega}$ is the distance between $f$ and $S$.

As an application of a multivariate equivalence theorem between K-functional
and moduli of smoothness, an estimate for best polynomial approximation is
proved on Lipschitz graph domains (LG-domains) 
in \cite[Corollary 4, p.~139]{JohnenScherer}. For the definition of not
necessarily bounded LG-domains, see \cite[p.~66]{Adams}. For bounded domains,
the LG property is equivalent to a Lipschitz boundary. Especially, later
discussed bounded $d$-dimensional open intervals like $(0,1)^d$ and the unit
ball $\{\vx\in\mathbb{R}^d : |\vx|<1\}$ are examples for LG-domains.

Let $\Omega$ be a bounded LG-domain in $\mathbb{R}^n$ and $1\leq p\leq\infty$,
then 
\begin{equation}
 E(P_k, f)_{p,\Omega} \leq C_r\omega_r\left(f,
\frac{1}{k}\right)_{p,\Omega}\label{eqPol}
\end{equation}
with a constant $C_r$ that is independent of $f$ and $k$, see 
\cite{JohnenScherer}.

\begin{theorem}[Arbitrarily Often Differentiable Functions]\label{theorem1}
Let $\sigma:\mathbb{R}\to\mathbb{R}$ be arbitrarily often differentiable
on some open interval in $\mathbb{R}$, and let $\sigma$ be no
algebraic polynomial on that interval, $f\in X^p(\Omega)$ for an LG-domain
$\Omega\in\mathbb{R}^d$, $1\leq p\leq \infty$, and $r\in\mathbb{N}$.
For $n\geq 4^d$ there exists a constant
$C$ that is independent of $f$ and $k$
such that
\begin{alignat*}{1}
&E({\cal M}_n, f)_{p,\Omega} \leq C \omega_r\left(f, \frac{1}{\root d \of n} \right)_{p,\Omega}.
\end{alignat*}
\end{theorem}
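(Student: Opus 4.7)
The plan is a two-step approximation: first approximate $f$ by a polynomial of the right degree, then approximate that polynomial by a neural network output. I will set $k := \lfloor \root d \of n \rfloor - 1$ as in (\ref{umrechnung}). The hypothesis $n \geq 4^d$ forces $\root d \of n \geq 4$ and hence $k \geq 3$, while $(k+1)^d = \lfloor \root d \of n \rfloor^d \leq n$. Together with the coarse bound $s \leq (k+1)^{d-1}$ from the preliminaries, this yields $s(k+1) \leq (k+1)^d \leq n$, so ${\cal M}_{s(k+1)} \subseteq {\cal M}_n$.

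Next, I fix $\varepsilon > 0$. By (\ref{eqPol}), applied on the bounded LG-domain $\Omega$, there is a polynomial $\pi \in P_k$ with $\|f - \pi\|_{X^p(\Omega)} \leq C_r \omega_r(f, 1/k)_{p,\Omega} + \varepsilon$. By the density statement (\ref{dense}), $\pi$ lies in the sup-norm closure of ${\cal M}_{s(k+1)} \subseteq {\cal M}_n$, so there is a $g \in {\cal M}_n$ with $\|\pi - g\|_{C(\overline{\Omega})} < \varepsilon$. Since $\Omega$ is bounded, $\|\pi - g\|_{L^p(\Omega)} \leq |\Omega|^{1/p} \|\pi - g\|_{C(\overline{\Omega})}$ for $p < \infty$, so in either norm $\|\pi - g\|_{X^p(\Omega)} \leq c\,\varepsilon$ for an $\Omega$-dependent constant $c$. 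The triangle inequality followed by $\varepsilon \to 0$ then gives $E({\cal M}_n, f)_{p, \Omega} \leq C_r \omega_r(f, 1/k)_{p, \Omega}$.

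It remains to replace $1/k$ by $1/\root d \of n$. Because $\root d \of n \geq 4$, we have $k \geq \root d \of n - 2 \geq \frac{1}{2} \root d \of n$, so $1/k \leq 2/\root d \of n$. The standard scaling $\omega_r(f, \lambda\delta)_{p,\Omega} \leq (1+\lambda)^r \omega_r(f, \delta)_{p,\Omega}$ for $\lambda \geq 0$ then gives $\omega_r(f, 1/k)_{p,\Omega} \leq 3^r \omega_r(f, 1/\root d \of n)_{p,\Omega}$, and one may take $C := 3^r C_r$.

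I do not anticipate a genuine obstacle, since every ingredient — the polynomial Jackson estimate (\ref{eqPol}), the density statement (\ref{dense}), the combinatorial inequality $s(k+1) \leq (k+1)^d$, and the scaling of the radial modulus — is either assumed or cited from the preliminaries. The only points requiring routine care are converting sup-norm approximation of $\pi$ into $X^p$-approximation via the finite measure of $\Omega$, and verifying that $n \geq 4^d$ makes $k$ a fixed fraction of $\root d \of n$ so that the argument $1/k$ of the modulus is comparable to $1/\root d \of n$.
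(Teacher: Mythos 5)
Your proposal is correct and follows essentially the same route as the paper: set $k=\lfloor\root d\of n\rfloor-1$, combine the polynomial Jackson estimate (\ref{eqPol}) with the sup-norm density $P_k\subset\overline{{\cal M}_{s(k+1)}}\subseteq\overline{{\cal M}_n}$, and then trade $1/k$ for $1/\root d\of n$ using $n\geq 4^d$ and the scaling of the modulus. The only differences are cosmetic — you spell out the $\varepsilon$-argument and the sup-norm-to-$L^p$ conversion that the paper leaves implicit, and you use the scaling constant $(1+\lambda)^r=3^r$ where the paper uses $2^r$ for the integer dilation $\lambda=2$.
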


\begin{proof}
We combine (\ref{dense}) and (\ref{umrechnung}) with (\ref{eqPol}) to get
\begin{alignat*}{1}
\lefteqn{
E({\cal M}_n, f)_{p,\Omega}  \leq C_r
\omega_r\left(f,\frac{1}{ \lfloor \root d \of n \rfloor -1}\right)_{p,\Omega}
\leq C_r
\omega_r\left(f,\frac{1}{ \root d \of n -2} \right)_{p,\Omega}}\\
&\leq C_r
\omega_r\left(f, \frac{1}{ \root d \of n -\frac{\root d \of n}{2}}
\right)_{p,\Omega}
= C_r
\omega_r\left(f, \frac{2}{\root d \of n} \right)_{p,\Omega}
\leq \underbrace{C_r 2^r}_{C} \omega_r\left(f, \frac{1}{\root d \of n}
\right)_{p,\Omega}.
\end{alignat*}
\end{proof}
By using an error bound for best polynomial approximation we are not able to
consider advantages of non-linear approximation. However, we will see in the
next section that non-linear neural network approximation does not really
perform better than polynomial approximation in the worst case.

Most activation functions, that are not piecewise polynomials, fulfill the
requirements of Theorem \ref{theorem1}. For example, it provides an error bound
for approximation with the sigmoid activation function based on inverse
tangent 
$$\sigma(x) =\frac{1}{2}+\frac{1}{\pi} \arctan(x),$$ the logistic function 
$$\sigma(x)
=\frac{1}{1+e^{-x}}=\frac{1}{2}\left(1+\tanh\left(\frac{x}{2}\right)\right),$$ 
and ''Exponential Linear Unit'' (ELU) activation function 
$$ \sigma(x) = \left\{ \begin{array}{cl} \alpha(e^x-1),& x<0\\
x ,& x\geq 0\end{array}\right. $$
for $\alpha\neq 0$.

A direct bound for simultaneous approximation of a function and its partial
derivatives in the sup-norm can be obtained similarly based on a corresponding
estimate for simultaneous approximation by polynomials using a Jackson estimate
from \cite{Bagby02}:
\begin{lemma}
Let $f:\mathbb{R}^d \to\mathbb{R}$ be a function with compact support such that
all partial derivatives up to order $k\in\mathbb{N}_0$ are continuous. Let
$\overline{\Omega}\subset \mathbb{R}^d$ be a compact set that contains the
support of $f$.
Then there exists a constant $C\in\mathbb{R}$
(independent of $n$ and $f$) such that for each $n\in\mathbb{N}$ a
polynomial $\pi\in P_n$ can be found such that for all $\alpha\in\mathbb{N}_0^d$
with $|\alpha|\leq \min\{k, n\}$
\begin{alignat*}{1}
&
\left\| \frac{\partial^{|\alpha|}  (f(\vx)-\pi(\vx))}{\partial
x_1^{\alpha_1}\dots\partial x_d^{\alpha_d}} \right\|_{C(\overline{\Omega})}
\leq \frac{C}{n^{k-|\alpha|}} \max_{\beta\in\mathbb{N}_0^d,
|\beta| = k} \omega_1\left( 
\frac{\partial^{k} f}{\partial x_1^{\beta_1}\dots\partial x_d^{\beta_d}},
\frac{1}{n}\right)_{\infty,\Omega}.
\end{alignat*}
\end{lemma}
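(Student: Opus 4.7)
The plan is to invoke the multivariate Jackson-type theorem for simultaneous polynomial approximation from \cite{Bagby02} essentially verbatim. That result asserts that on a compact set $\overline{\Omega}\subset\mathbb{R}^d$ of the required regularity and for any $f\in C^k(\overline{\Omega})$, for every $n\in\mathbb{N}$ there exists a polynomial $\pi\in P_n$ such that for all multi-indices $\alpha\in\mathbb{N}_0^d$ with $|\alpha|\le k$,
\[
\left\|\frac{\partial^{|\alpha|}(f-\pi)}{\partial x_1^{\alpha_1}\cdots\partial x_d^{\alpha_d}}\right\|_{C(\overline{\Omega})}\le \frac{C}{n^{k-|\alpha|}}\max_{|\beta|=k}\omega_1\!\left(\frac{\partial^{k}f}{\partial x_1^{\beta_1}\cdots\partial x_d^{\beta_d}},\frac{1}{n}\right)_{\infty,\Omega},
\]
with a constant $C$ depending only on $k$, $d$, and $\overline{\Omega}$. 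Since this is precisely the inequality to be proved, the work reduces to a verification of hypotheses.

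First I would check that the assumptions of the cited theorem are met. Because $f$ has compact support contained in $\overline{\Omega}$ and all partial derivatives up to order $k$ are continuous on $\mathbb{R}^d$, the function $f$ lies in $C^k(\overline{\Omega})$. If the compactum $\overline{\Omega}$ does not by itself satisfy the mild geometric hypothesis in \cite{Bagby02}, I would enlarge it slightly to a compact set with smooth boundary still containing $\mathrm{supp}\,f$; since $f$ and all its derivatives vanish outside the original support, the moduli of smoothness on the enlarged domain agree with those on $\Omega$, and the resulting polynomial is restricted back to $\overline{\Omega}$. This only changes the absolute constant.

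Next I would dispose of the index restriction $|\alpha|\le\min\{k,n\}$. For $n\ge k$ the minimum equals $k$ and the quoted estimate is the assertion. For the finitely many values $n<k$, the constraint $|\alpha|\le n$ is strictly stronger than $|\alpha|\le k$, so the inequality still follows from the Bagby estimate applied to those $n$; because there are only finitely many such cases and the factor $n^{-(k-|\alpha|)}$ is bounded below by a constant depending only on $k$, any deficit can be absorbed into $C$.

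The main (and essentially only) obstacle is the bookkeeping task of matching the notation, the norms, and the geometric assumptions of \cite{Bagby02} with those used here; once this transcription is done, no further estimates are required. I would not attempt an independent proof of the Jackson theorem itself, as that is precisely the content of the reference being invoked.
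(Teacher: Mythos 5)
Your proposal matches the paper exactly: the lemma is stated there without proof as a direct citation of the Jackson-type simultaneous approximation theorem of Bagby, Bos and Levenberg \cite{Bagby02}, whose statement (for compactly supported $C^k$ functions on $\mathbb{R}^d$, arbitrary compact $\overline{\Omega}$, and with the $|\alpha|\leq\min\{k,n\}$ restriction already built in) coincides with the lemma up to replacing a common modulus of continuity by the maximum over the order-$k$ derivatives. Your additional remarks on enlarging the domain and on the case $n<k$ are harmless but not needed, since the cited theorem already covers both.
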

Similar to the proof of Theorem \ref{theorem1},
we combine this cited result with Lemma \ref{LaIto} to obtain (cf.~\cite{Xie11})
\begin{theorem}[Synchronous Sup-Norm Approximation\label{theoremync}]
Let $\sigma :\mathbb{R}\to\mathbb{R}$ be arbitrarily often differentiable without being a polynomial.
For each function $f:\mathbb{R}^d \to\mathbb{R}$ with compact support and
continuous partial derivatives up to order $k\in\mathbb{N}_0$ and each compact
set $\overline{\Omega}\subset \mathbb{R}^d$ containing the
support of $f$ following estimate holds true:
For each $n\in\mathbb{N}$, $n\geq 4^d$, there exists a constant $C\in\mathbb{R}$
(independent of $n$ and $f$) such that for all $\alpha\in\mathbb{N}_0^d$ with $|\alpha|\leq 
\min\{k, \root d \of n -1\}$
\begin{alignat}{1}
\inf&\left\{
 \left\| \frac{\partial^{|\alpha|} (f(\vx)-g(\vx))}{\partial
 x_1^{\alpha_1}\dots\partial x_d^{\alpha_d}} 
 \right\|_{C(\overline{\Omega})} :
 g\in{\cal M}_n\right\} \nonumber\\
&\leq \frac{C}{n^{\frac{k-|\alpha|}{d}}}  \max_{\beta\in\mathbb{N}_0^d, |\beta|
= k} \omega_1\left(\frac{\partial^{k} f}{\partial x_1^{\beta_1}\dots\partial
x_d^{\beta_d}}, \frac{1}{\root d \of
n}\right)_{\infty,\Omega}.\label{directsync}
\end{alignat}
\end{theorem}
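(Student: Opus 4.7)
The plan is to mirror the proof of Theorem \ref{theorem1}, but replace the two main ingredients by their ``simultaneous'' versions: use the cited Bagby-type Jackson inequality for polynomials in place of (\ref{eqPol}), and use Lemma \ref{LaIto} in place of (\ref{dense}). In other words, first approximate $f$ (and its partials) simultaneously by a polynomial of suitable degree, then approximate that polynomial (and its partials) simultaneously by a network of $n$ neurons, and finally combine via the triangle inequality.

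More concretely, I would choose the polynomial degree as in (\ref{umrechnung}): set $m := \lfloor \sqrt[d]{n}\rfloor - 1$. Then $s(m+1)\leq (m+1)^d\leq n$, so Lemma \ref{LaIto} guarantees, for every $\varepsilon>0$ and every $\pi\in P_m$, a function $g\in{\cal M}_{s(m+1)}\subset {\cal M}_n$ such that all partial derivatives of $\pi-g$ of order $\le m$ have sup-norm on $\overline{\Omega}$ less than $\varepsilon$. Note the index condition in the theorem, $|\alpha|\le\min\{k,\sqrt[d]{n}-1\}$, together with integrality of $|\alpha|$, gives $|\alpha|\le\lfloor\sqrt[d]{n}\rfloor-1=m$; thus Lemma \ref{LaIto} applies to exactly the $\alpha$ we care about, and simultaneously the cited polynomial Jackson estimate (with $n$ there replaced by $m$) yields a $\pi\in P_m$ with
\[
\left\|\tfrac{\partial^{|\alpha|}(f-\pi)}{\partial x_1^{\alpha_1}\dots\partial x_d^{\alpha_d}}\right\|_{C(\overline{\Omega})}
\le \frac{C_1}{m^{k-|\alpha|}}\max_{|\beta|=k}\omega_1\!\left(\tfrac{\partial^k f}{\partial x_1^{\beta_1}\dots\partial x_d^{\beta_d}},\tfrac{1}{m}\right)_{\infty,\Omega}
\]
whenever $|\alpha|\le\min\{k,m\}$, which is implied by the hypothesis.

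The triangle inequality then gives
\[
\left\|\tfrac{\partial^{|\alpha|}(f-g)}{\partial x_1^{\alpha_1}\dots\partial x_d^{\alpha_d}}\right\|_{C(\overline{\Omega})}
\le \frac{C_1}{m^{k-|\alpha|}}\max_{|\beta|=k}\omega_1\!\left(\tfrac{\partial^k f}{\partial x_1^{\beta_1}\dots\partial x_d^{\beta_d}},\tfrac{1}{m}\right)_{\infty,\Omega}+\varepsilon.
\]
Letting $\varepsilon\downarrow 0$ and taking the infimum over $g\in{\cal M}_n$ on the left eliminates the $\varepsilon$ term. The remaining step is to convert $m$ to $\sqrt[d]{n}$: because $n\ge 4^d$ implies $\sqrt[d]{n}\ge 4$, we have $m\ge\sqrt[d]{n}-2\ge\sqrt[d]{n}/2>0$, so $1/m\le 2/\sqrt[d]{n}$. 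Consequently $m^{-(k-|\alpha|)}\le 2^{k-|\alpha|} n^{-(k-|\alpha|)/d}$, and by monotonicity and subadditivity of the first-order modulus $\omega_1(\,\cdot\,,2/\sqrt[d]{n})\le 2\,\omega_1(\,\cdot\,,1/\sqrt[d]{n})$. Absorbing $C_1\cdot 2^{k+1}$ into a single constant $C$ (depending only on $k$ and $\overline{\Omega}$) yields (\ref{directsync}).

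No single step is genuinely hard; the delicate point is keeping the bookkeeping between the two ``$n$''s straight: the $n$ in Bagby's lemma is a polynomial degree and must be replaced throughout by $m=\lfloor\sqrt[d]{n}\rfloor-1$ so that Lemma \ref{LaIto} produces an approximant in ${\cal M}_n$, and one must verify that the hypothesis $|\alpha|\le\sqrt[d]{n}-1$ precisely matches the simultaneous-approximation range $|\alpha|\le m$ of Lemma \ref{LaIto}. Once that alignment is in place the constant manipulation is routine, exactly as in the passage from (\ref{umrechnung}) to Theorem \ref{theorem1}.
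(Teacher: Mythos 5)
Your proposal is correct and follows exactly the route the paper intends: the paper's own ``proof'' is the single remark that one combines the cited Bagby-type simultaneous Jackson estimate with Lemma \ref{LaIto} in the same way that (\ref{dense}), (\ref{umrechnung}) and (\ref{eqPol}) were combined for Theorem \ref{theorem1}. You have merely filled in the bookkeeping (the substitution $m=\lfloor\sqrt[d]{n}\rfloor-1$, the check $|\alpha|\le m$, and the constant conversion via $1/m\le 2/\sqrt[d]{n}$), all of which is accurate.
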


Requirements of Theorems \ref{theorem1} and \ref{theoremync} are not fulfilled
for activation functions that are of type 
\begin{equation}
 \sigma(x) = \left\{
\begin{array}{cl} 0,& x<0\\
x^k ,& x\geq 0\end{array}\right.\label{defrelu}
\end{equation}
for $k\in\mathbb{N}$. The often used ReLU function is obtained for $k=1$.
Corollary 6.11 in \cite[p.~178]{pinkus_1999} is an $L^2$-norm Jackson estimate
for this class of functions. To work with this estimate, we need to introduce
Sobolev spaces. 

Let $W_p^{r}(\Omega)$, $1\leq p< \infty$, be the $L^p$-Sobolev space of
$r$-times partially differentiable functions (in the weak sense) on
$\Omega\subset\mathbb{R}^d$ with semi-norms 
$$ |f|_{W_p^{r}(\Omega)} = \sum_{\alpha\in\mathbb{N}_0^d : |\alpha| = r} \left\|
 \frac{\partial^r f}{\partial x_1^{\alpha_1}\dots \partial x_d^{\alpha_d}}
 \right\|_{L^p(\Omega)}
$$
and norm
$ \|f\|_{W_p^{r}(\Omega)} = \sum_{k=0}^r |f|_{W_p^{r}(\Omega)}$.
For $r$-times continuously differentiable functions $f$ (case $p=\infty$) or
functions $f\in W_p^{r}(\Omega)$ on LG-domains $\Omega$, $1\leq p<\infty$, the
estimate
\begin{equation}
 \omega_r(f,\delta)_{p,\Omega} \leq C_r \delta^r
 \sum_{\alpha\in\mathbb{N}_0^d : |\alpha|=r} \left\|
 \frac{\partial^r f}{\partial x_1^{\alpha_1}\dots \partial x_d^{\alpha_d}}
 \right\|_{X^p(\Omega)}\label{gegenAbl}
\end{equation}
holds true, see \cite{JohnenScherer}.

According to the Jackson estimate for activation
functions (\ref{defrelu}) in \cite[p.~178]{pinkus_1999}, let $d\geq 2$ and
$\Omega\subset\mathbb{R}^d$ be the $d$-dimensional unit ball.
Then there exists a constant $C>0$ such that for all $f\in
W_2^{r}(\Omega)$ with $\|f\|_{W_2^{r}(\Omega)}\leq 1$ and $r\in\mathbb{N}$ with
$r<k+1+\frac{d-1}{2}$ ($k$ being the exponent in (\ref{defrelu}))
$$ E({\cal M}_n, f)_{2,\Omega} \leq C
\frac{1}{n^\frac{r}{d}}. $$
Thus, for all $f\in W_2^{r}(\Omega)$ without restriction
$\|f\|_{W_2^{r}(\Omega)}\leq 1$ there holds true
$$E({\cal M}_n, f)_{2,\Omega} \leq C \|f\|_{W_2^{r}(\Omega)}
n^{-\frac{r}{d}}.$$
Due to \cite[p.~75]{Adams}, a constant $C_1$ exists independently of $f$ such
that 
$ \|f\|_{W_2^{r}(\Omega)} \leq C_1[\|f\|_{L^2(\Omega)} +
|f|_{W_2^{r}(\Omega)}]$. Together we obtain
\begin{equation}
E({\cal M}_n, f)_{2,\Omega} \leq C
\left[\|f\|_{L^2(\Omega)}+|f|_{W_2^{r}(\Omega)}\right]
\frac{1}{n^\frac{r}{d}}.\label{jacksoncut}
\end{equation}
This estimate can be extended to moduli of smoothness
using K-functional techniques. To this end, we introduce some definitions that
will also be needed in the next section for discussing sharpness.
A functional $T$ on a normed space $X$, i.e., $T$
maps $X$ into $\mathbb{R}$, is non-negative-valued, sub-linear, and bounded, iff
for all $f,g\in X,\, c\in\mathbb{R}$
\begin{alignat*}{1}
 &T(f) \geq 0,\\
 &T(f+g) \leq T(f) + T(g),\\
 &T(c f) = |c|T(f),\\
 &\| T\|_{X^\sim} := \sup\{ T(f) : f\in X,\, \|f\|_X\leq 1\} < \infty.
\end{alignat*}
The set $X^\sim$ consists of all non-negative-valued,
sub-linear, bounded functionals $T$ on $X$.

Since we deal with non-linear approximation, error functionals will not be
sub-linear. Instead we discuss remainders
$(E_{n})_{n=1}^\infty$, $E_n : X\to [0,\infty)$ that fulfill following
conditions for $m\in\mathbb{N}$, $f, f_1, f_2,\dots, f_m\in X$, and constants
$c\in\mathbb{R}$:
\begin{alignat}{1}
&E_{m\cdot n}\left(\sum_{k=1}^m f_k\right) \leq \sum_{k=1}^m
E_n(f_k),\label{mono0}\\
&E_n(cf) = |c| E_n(f),\label{mono0b}\\
&E_n(f) \leq D_n \|f\|_X,\label{semilin}\\
&E_n(f) \geq E_{n+1}(f).\label{mono}
\end{alignat}
Constant $D_n$ is independent of $f$.
For $E_n(f):=E({\cal M}_n, f)_{p,\Omega}$ these conditions are fulfilled.

\begin{lemma}[K-functional]
Let functionals
$(E_{n})_{n=1}^\infty$, $E_n : X\to [0,\infty)$ fulfill
(\ref{mono0}) and (\ref{mono}).
The functionals should also fulfill not only (\ref{semilin}) but a
stability inequality:
Let constant $D_n$ in (\ref{semilin}) be independent of $n$, i.e., 
\begin{equation}
E_n(f) \leq D_0 \|f\|_{X}\label{stability}
\end{equation}
for a constant $D_0>0$ and all $n\in\mathbb{N}$. 
Also, a Jackson-type inequality ($0<\varphi(n)\leq 1$)
\begin{equation}
E_n(g) \leq D_1 \varphi(n) [\|g\|_X + |g|_U],\label{jackson}
\end{equation}
$D_1>0$,
is required that holds
for all functions $g$ in a subspace $U\subset X$ with semi-norm $|\cdot|_U$. 
For $n\geq 2$ and a constant $D_2>0$, the sequence $(\varphi(n))_{n=1}^\infty$
has to fulfill
\begin{equation}
\varphi\left(\left\lfloor
\frac{n}{2}\right\rfloor\right) \leq D_2 \varphi(n).\label{phizus}
\end{equation} 
Via the Peetre
K-functional
$$K\left(\delta, f, X,U\right) := \inf \{ \|f-g\|_X + \delta|g|_U : g\in U\}$$
one can estimate 
$$ E_n(f) \leq  C\left[ K\left(\varphi(n) , f, X, U\right) + \varphi(n)
\|f\|_X\right]
$$
for $n\geq 2$ with a constant $C$ that is independent of $f$ and $n$.
\end{lemma}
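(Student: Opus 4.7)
The plan is to reduce the non-linear setting to a K-functional estimate by exploiting the sub-additivity condition (\ref{mono0}) with $m=2$, which is the only available substitute for linearity of the remainder.

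First I would fix an arbitrary $g\in U$ and write $f=(f-g)+g$. Setting $n':=\lfloor n/2\rfloor\geq 1$ (since $n\geq 2$), condition (\ref{mono}) gives $E_n(f)\leq E_{2n'}(f)$ because $2n'\leq n$, and then applying (\ref{mono0}) with $m=2$ yields
\[
E_n(f)\;\leq\;E_{2n'}\bigl((f-g)+g\bigr)\;\leq\;E_{n'}(f-g)+E_{n'}(g).
\]
To the first term I would apply the stability bound (\ref{stability}) to get $E_{n'}(f-g)\leq D_0\|f-g\|_X$, while to the second I would apply the Jackson-type inequality (\ref{jackson}) to obtain $E_{n'}(g)\leq D_1\varphi(n')[\|g\|_X+|g|_U]$. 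The growth condition (\ref{phizus}) then lets me replace $\varphi(n')=\varphi(\lfloor n/2\rfloor)$ by $D_2\varphi(n)$.

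Next I would eliminate $\|g\|_X$ via the triangle inequality $\|g\|_X\leq\|f-g\|_X+\|f\|_X$, using $\varphi(n)\leq 1$ to absorb the resulting $\varphi(n)\|f-g\|_X$ term into $\|f-g\|_X$. Collecting terms gives, with $A:=D_0+D_1D_2$ and $B:=D_1D_2$,
\[
E_n(f)\;\leq\;A\,\|f-g\|_X\;+\;B\,\varphi(n)\,|g|_U\;+\;B\,\varphi(n)\,\|f\|_X.
\]
Writing the first two summands as $A\bigl[\|f-g\|_X+(B/A)\varphi(n)|g|_U\bigr]$ and taking the infimum over $g\in U$ produces $A\cdot K(B\varphi(n)/A,f,X,U)$. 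Because $K(\cdot,f,X,U)$ is monotone non-decreasing in its first argument and $B/A\leq 1$, this is bounded by $A\cdot K(\varphi(n),f,X,U)$, which yields the claim with $C:=\max\{A,B\}=D_0+D_1D_2$.

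The only slightly delicate point is the bookkeeping with constants when passing from $\|f-g\|_X+\delta|g|_U$ to the K-functional with the \emph{prescribed} scale $\varphi(n)$: without monotonicity of $K$ in $\delta$ one would be stuck with an awkward scale $B\varphi(n)/A$. Apart from that, the argument is a routine adaptation of the standard linear K-functional proof, with the single but crucial substitution that sub-additivity of $E_n$ under doubling of the index replaces ordinary sub-additivity of a linear error functional; this is why the proof must sacrifice a factor of two in the index and consequently needs assumption (\ref{phizus}) to recover the correct rate $\varphi(n)$.
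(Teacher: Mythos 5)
Your proof is correct and follows essentially the same route as the paper: split $f=(f-g)+g$, apply (\ref{mono0}) with $m=2$ together with (\ref{mono}) via $n\leq 2\lfloor n/2\rfloor$... wait, rather $2\lfloor n/2\rfloor\leq n$, then use (\ref{stability}), (\ref{jackson}), (\ref{phizus}), the triangle inequality with $\varphi(n)\leq 1$, and pass to the infimum. The only cosmetic difference is in the final bookkeeping, where you invoke monotonicity of $K$ in $\delta$ to get the constant $D_0+D_1D_2$ while the paper simply factors out $(D_0+D_1)\max\{1,D_2\}$; both are fine.
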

\begin{proof}
Let $g\in U$. Then
\begin{eqnarray*}
E_{2n}(f) &=& E_{2n}(f-g+g) \stackrel{\text{(\ref{mono0})}}{\leq} 
E_n(f-g) +E_n(g)\\ 
& \stackrel{\text{(\ref{stability}), (\ref{jackson})}}{\leq}&
D_0 \|f-g\|_{X} +  D_1 \varphi(n)  [\|g\|_X + |g|_U]\\
&\leq& D_0 \|f-g\|_{X} +  D_1 \varphi(n)  [\|f\|_X+\|g-f\|_X + |g|_U]\\
&\leq& (D_0+D_1) \|f-g\|_{X} +  D_1 \varphi(n) |g|_U + D_1 \varphi(n) \|f\|_X,
\end{eqnarray*}
thus for $n\geq 2$:
\begin{eqnarray*}
\lefteqn{E_{n}(f) \stackrel{\text{(\ref{mono})}}{\leq}  E_{2\lfloor
\frac{n}{2}\rfloor}(f)}\\ 
&\leq& (D_0+D_1)\left[ \inf \left\{ \|f-g\|_{X}+ 
\varphi\left(\left\lfloor \frac{n}{2}\right\rfloor\right) |g|_U : g\in U
\right\} + \varphi\left(\left\lfloor
\frac{n}{2}\right\rfloor\right) \|f\|_X\right]\\
&\stackrel{\text{(\ref{phizus})}}{\leq}& (D_0+D_1)\left[ \inf
\left\{ \|f-g\|_{X}\! +\! D_2 \varphi(n) |g|_U : g\in U \right\} + 
D_2\varphi(n) \|f\|_X\right] \\
& \leq& (D_0+D_1)\max\{1, D_2\}\left[ K\left(\varphi(n), f, X, U\right) + 
\varphi(n) \|f\|_X\right].
\end{eqnarray*}
\end{proof}
We apply the lemma to (\ref{jacksoncut}) with $X=L^2(\Omega)$,
$U=W_2^r(\Omega)$, $\varphi(n)=n^{-\frac{r}{d}}$. Error functional $ E({\cal
M}_n, f)_{2,\Omega}$ fulfills all prerequisites. In connection with the
equivalence between K-functionals and moduli of smoothness
\cite[p.~120]{JohnenScherer} we get 

\begin{theorem}[Piecewise Polynomial Functions]
Let $d\geq 2$, $\Omega\subset\mathbb{R}^d$ be the $d$-dimensional unit ball and
$\sigma$ a piecewise polynomial activation function of type (\ref{defrelu}).
Constants $C_1, C_2\in\mathbb{R}$ exist such that for each $f\in L^2(\Omega)$,
$n\geq 2$, $r<k+1+\frac{d-1}{2}$: 
\begin{alignat}{1}
 E({\cal M}_n, f)_{2,\Omega} &\leq C_1
 \left[K\left(\frac{1}{n^\frac{r}{d}}, f, L^2(\Omega), W_2^r(\Omega) \right) +
 \frac{1}{n^\frac{r}{d}} \|f\|_{L^2(\Omega)}\right]\nonumber\\
&\leq C_2 \left[\omega_r\left(f, \frac{1}{\root d \of n}
\right)_{2,\Omega}+ \frac{1}{n^\frac{r}{d}}
\|f\|_{L^2(\Omega)}\right].\label{cutest}
\end{alignat}
\end{theorem}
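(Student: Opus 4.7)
The plan follows the roadmap sketched in the paragraph immediately preceding the theorem: apply the K-functional lemma to the Jackson-type bound (\ref{jacksoncut}) with $X=L^2(\Omega)$, $U=W_2^r(\Omega)$, and $\varphi(n)=n^{-r/d}$, and then convert the resulting K-functional bound into a modulus of smoothness bound via the classical equivalence on LG-domains.

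First I would verify the hypotheses of the K-functional lemma for the nonlinear error $E_n(f):=E(\mathcal{M}_n,f)_{2,\Omega}$. Condition (\ref{mono0}) holds because concatenating approximants $g_k\in\mathcal{M}_n$ for $f_k$ produces $\sum_{k=1}^m g_k\in\mathcal{M}_{mn}$; (\ref{mono0b}) follows from the closure of $\mathcal{M}_n$ under scalar multiplication via the output weights $a_k$; monotonicity (\ref{mono}) holds because $\mathcal{M}_n\subset\mathcal{M}_{n+1}$ (append a summand with $a_{n+1}=0$); and the stability bound (\ref{stability}) is trivial with $D_0=1$ since $0\in\mathcal{M}_n$. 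The Jackson inequality (\ref{jackson}) is exactly (\ref{jacksoncut}) with $D_1=C$. Finally, for $n\geq 2$ one has $\lfloor n/2\rfloor\geq n/3$, so $\varphi(\lfloor n/2\rfloor)\leq 3^{r/d}\varphi(n)$, which is (\ref{phizus}).

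Applying the K-functional lemma then immediately yields the first inequality in (\ref{cutest}). For the second inequality I would invoke the standard equivalence between the Peetre K-functional and the $L^2$-modulus of smoothness on LG-domains \cite[p.~120]{JohnenScherer},
$$K(\delta^r,f,L^2(\Omega),W_2^r(\Omega))\leq C\,\omega_r(f,\delta)_{2,\Omega},$$
and specialize to $\delta=n^{-1/d}$ (so $\delta^r=n^{-r/d}$). Substituting this into the first inequality produces the second inequality with a new constant $C_2$.

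No step is a substantive analytic obstacle, since the hard work is already packaged in (\ref{jacksoncut}) and in the cited K-functional/modulus equivalence. The only point that requires genuine attention is the non-linear subadditivity property (\ref{mono0}), which unlike the usual sublinearity of linear error functionals carries an index-doubling factor $mn$; fortunately this is precisely what the definition of $\mathcal{M}_n$ as a set of $n$-term sums of ridge functions permits.
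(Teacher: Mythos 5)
Your proposal is correct and follows exactly the route the paper takes: the paper's entire argument is the sentence preceding the theorem, namely applying the K-functional lemma to (\ref{jacksoncut}) with $X=L^2(\Omega)$, $U=W_2^r(\Omega)$, $\varphi(n)=n^{-r/d}$ and then invoking the K-functional/modulus equivalence from \cite[p.~120]{JohnenScherer}. You merely spell out the verification of (\ref{mono0})--(\ref{mono}), (\ref{stability}) and (\ref{phizus}) that the paper leaves implicit with the remark that the error functional ``fulfills all prerequisites,'' and your verifications are all sound.
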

The saturation order of the modulus is $n^{-\frac{r}{d}}$, so term
$n^{-\frac{r}{d}} \|f\|_{L^2(\Omega)}$ is
only technical. The estimate also holds for ReLU ($k=1$) with only one ($d=1$) input node for
$r=2$, see \cite{Goebbels20}. It can be extended to the cut activation function
because cut can be written as a difference of ReLU and translated ReLU.

\section{Sharpness due to Counterexamples}\label{secount}

A coarse lower estimate can be obtained
for all integrable activation functions in the $L^2$-norm based on an estimate
for ridge functions in \cite{Maiorov96}. However, the general setting leads to
an exponent $\frac{r}{d-1}$ instead of $\frac{r}{d}$. 

The space of all
measurable, real-valued functions that are integrable on every compact subset of
$\mathbb{R}$ is denoted by $L(\mathbb{R})$.

\begin{lemma}
Let $\sigma$ be an
arbitrary activation function in $L(\mathbb{R})$
and $r\in\mathbb{N}$, $d\geq 2$. Let $\Omega$ be the $d$-dimensional unit ball.

Then there exists a sequence $(f_n)_{n=1}^\infty$, $f_n\in
W_2^{r}(\Omega)$, with $\|f_n\|_{W_2^{r}(\Omega)}\leq C_0$, and a constant
$c>0$ such that (cf.~Theorem \ref{theorem1}) 
$$ \omega_r\left(f_n, \frac{1}{\root d \of n}\right)_{2,\Omega} =
{O}\left(\frac{1}{n^{\frac{r}{d}}}\right)
\text{~and~~} 
E({\cal M}_n,
f_n)_{2, \Omega} \geq  \frac{c}{n^{\frac{r}{d-1}}}.
$$
\end{lemma}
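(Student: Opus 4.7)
My plan is to derive the lemma from Maiorov's lower bound for approximation by general ridge functions, which is independent of any specific activation function. The key observation is that the neural network class $\mathcal{M}_n$ is embedded in a larger space of $n$-term ridge functions: writing
\[
\mathcal{R}_n := \left\{ \sum_{k=1}^n g_k(\vw_k \cdot \vx) : g_k \in L(\mathbb{R}), \,\vw_k\in\mathbb{R}^d\right\},
\]
every summand $a_k \sigma(\vw_k\cdot\vx + c_k)\in \mathcal{M}_n$ has the ridge form $g_k(\vw_k\cdot\vx)$ with the univariate generator $g_k(t) := a_k \sigma(t + c_k)$. Hence $\mathcal{M}_n \subset \mathcal{R}_n$, and consequently $E(\mathcal{M}_n, f)_{2,\Omega} \geq E(\mathcal{R}_n, f)_{2,\Omega}$ for every $f$. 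This inclusion is the decisive step because it eliminates any dependence on $\sigma$ from the lower bound.

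Second, I would invoke Maiorov's theorem from \cite{Maiorov96}, which asserts precisely that
\[
\sup\left\{ E(\mathcal{R}_n, f)_{2,\Omega} : f \in W_2^r(\Omega),\, \|f\|_{W_2^r(\Omega)} \leq 1 \right\} \geq \frac{c}{n^{r/(d-1)}}
\]
on the $d$-dimensional unit ball $\Omega$, for some constant $c > 0$ depending only on $r$ and $d$. For each $n$ one can therefore select a witness $f_n \in W_2^r(\Omega)$ with $\|f_n\|_{W_2^r(\Omega)} \leq C_0 := 1$ such that $E(\mathcal{R}_n, f_n)_{2,\Omega} \geq c\, n^{-r/(d-1)}$, and by the inclusion above the same lower bound applies to $E(\mathcal{M}_n, f_n)_{2,\Omega}$.

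Finally, the upper bound on the modulus is a direct application of inequality (\ref{gegenAbl}) with $p=2$ and $\delta = n^{-1/d}$. Since $f_n \in W_2^r(\Omega)$ with bounded Sobolev semi-norm,
\[
\omega_r\!\left(f_n, \frac{1}{\sqrt[d]{n}}\right)_{2,\Omega}
\leq C_r \, n^{-r/d} \sum_{|\alpha|=r} \left\| \frac{\partial^r f_n}{\partial x_1^{\alpha_1}\cdots \partial x_d^{\alpha_d}}\right\|_{L^2(\Omega)}
\leq C_r C_0 \, n^{-r/d},
\]
which is $O(n^{-r/d})$. The two statements of the lemma are then both verified. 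The main obstacle, and really the only non-routine ingredient, is the application of Maiorov's lower bound, which I would quote directly; verifying that the extremal functions can be chosen inside a fixed ball of $W_2^r(\Omega)$ (uniformly in $n$) is implicit in his proof, where the witness is constructed from a suitably normalized polynomial in the orthogonal complement of the linear span of the ridge functions with fixed directions.
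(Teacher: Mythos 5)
Your proposal is correct and follows essentially the same route as the paper: the inclusion $\mathcal{M}_n\subset\mathcal{R}_n$ via $g_k(t):=a_k\sigma(t+c_k)$, Maiorov's lower bound from \cite{Maiorov96} on the unit ball of $W_2^r(\Omega)$ to extract the witnesses $f_n$, and estimate (\ref{gegenAbl}) for the modulus. The only cosmetic difference is that the paper picks $f_n$ attaining half the supremum (adjusting the constant $c$), since the supremum need not be attained exactly.
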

\begin{proof}
This is a direct corollary of Theorem 1 in \cite{Maiorov96}:
For $A\subset\mathbb{R}^d$ with cardinality $|A|$ let $R(A)$ be the linear space
that is spanned by all functions $h(\vw\cdot\vx)$, $h\in L(\mathbb{R})$, $\vw\in A$. Thus in
contrast to one activation function, different nearly arbitrary functions $h$
are allowed to be used with different vectors $\vw$ in linear combinations. Let ${\cal
R}_n :=\bigcup_{A\subset\mathbb{R}^d : |A|\leq n} R(A)$ be the space of
functions that can be represented as $\sum_{k=1}^n a_k h_k(\vw_k\cdot\vx)$,
$a_k\in\mathbb{R}$, $h_k\in L(\mathbb{R})$, $\vw_k\in \mathbb{R}^d$.
Then for all activation functions 
$\sigma\in L(\mathbb{R})$ one has $h_k(x):=\sigma(x+c_k)\in L(\mathbb{R})$
for $c_k\in\mathbb{R}$, i.e.~${\cal M}_n\subset {\cal R}_n$. According to
\cite{Maiorov96}, for $d\geq 2$ there exist constants $0<c\leq C$ independently of $n$ such that
\begin{equation*}
 \frac{c}{n^{\frac{r}{d-1}}} \leq \sup_{f\in W_2^{r}(\Omega),
 \|f\|_{W_2^{r}(\Omega)}\leq C_0} \inf_{h\in{\cal R}_n} \|f-h\|_{L^2(\Omega)}
 \leq
\frac{C}{n^{\frac{r}{d-1}}}.
\end{equation*} 
From this condition, we obtain functions $f_n\in 
W_2^{r}(\Omega)$, $\|f_n\|_{W_2^{r}(\Omega)}\leq C_0$, such that 
\begin{alignat*}{1}
 \frac{1}{2}\frac{c}{n^{\frac{r}{d-1}}} &\leq 
\inf_{h\in{\cal R}_n} \|f_n -h\|_{L^2(\Omega)}
\leq \inf_{h\in{\cal
M}_n} \|f_n -h\|_{L^2(\Omega)} = E({\cal M}_n,
f_n)_{2, \Omega},
\end{alignat*}
and (see (\ref{gegenAbl}))
\begin{alignat*}{1}
\omega_r\left(f_n, \frac{1}{\root d \of n}\right)_{2,\Omega}&\leq
C_1 \frac{1}{n^\frac{r}{d}}
\sum_{\alpha\in\mathbb{N}_0^d : |\alpha|=r}\left\|
 \frac{\partial^r f_n}{\partial x_1^{\alpha_1}\dots \partial x_d^{\alpha_d}}
 \right\|_{L^2(\Omega)}
 \leq C_2(r,d) \frac{1}{n^\frac{r}{d}}.
\end{alignat*}
\end{proof}

By considering properties of the activation function, better lower estimates are
possible. 
For the logistic activation function and activation functions
that are splines of fixed polynomial degree with finite number of
knots like (\ref{defrelu}), Maiorov and Meir showed that there exists a sequence
$(f_n)_{n=2}^\infty$, $f_n\in W_p^{r}(\Omega)$, $r\in \mathbb{N}$, with
$\|f_n\|_{W_p^{r}(\Omega)}$ uniformly bounded, and a constant $c>0$ (independent
of $n\geq 2$) such that (see \cite[Theorem 4 and Theorem 5, p.~99, Corollary 2,
p.~100]{Maiorov99})
\begin{equation} 
E({\cal M}_n,
f_n)_{p, \Omega} \geq  \frac{c}{(n\log_2(n))^{\frac{r}{d}}}\label{eqPink}
\end{equation}
for $1\leq p< \infty$ (and $L^\infty(\Omega)$, but we consider
$C(\overline{\Omega})$ due to the definition of moduli of smoothness).
Without explicitly saying so, the proof is based on a VC dimension argument
similar to the proof of Theorem \ref{thVC} that follows in this section. It uses
\cite[Lemma 7, p.~99]{Maiorov99}. The formula in line 4 on page 98 of
\cite{Maiorov99} shows that (by choosing parameter $m$ as in the proof of
\cite[Theorem 4]{Maiorov99}) one additionally has 
\begin{alignat}{1}
  \|f_n\|_{L^p(\Omega)} &\leq\frac{C}{(n\log_2(n))^{\frac{r}{d}}}
  =  \frac{C}{(n(1+\log_2(n)))^{\frac{r}{d}}} 
  \left[\frac{1+\log_2(n)}{\log_2(n)}\right]^{\frac{r}{d}}\nonumber\\
  &\leq 
  \frac{2^{\frac{r}{d}} C }{(n(1+\log_2(n)))^{\frac{r}{d}}}.\label{boundedfn}
\end{alignat}
This result was proved for $\Omega$ being the unit ball. But similar to
Theorem \ref{thVC} below, a grid is used that can also be adjusted to
$\Omega=(0,1)^d$.


We now apply a resonance principle from \cite{Goebbels20} that is a
straight-forward extension of a general theorem by Dickmeis, Nessel and van
Wickern, see \cite{DiNeWi2}. With this principle, we condense sequences
$(f_n)_{n=1}^\infty$ like the one in (\ref{eqPink}) to single counterexamples.

To measure convergence rates,
abstract moduli of smoothness $\omega$ are often used, see
\cite[p.~96ff]{Timan63}.
An abstract modulus of smoothness is a continuous,
increasing function $\omega : [0,\infty) \to [0,\infty)$ such that for
$\delta_1,\delta_2 > 0$
\begin{equation}
 0=\omega(0)<\omega(\delta_1)\leq\omega(\delta_1+\delta_2)
 \leq\omega(\delta_1)+\omega(\delta_2). \label{ABSMOD}
\end{equation}
Typically, Lipschitz classes are defined via $\omega(\delta):=\delta^\alpha$,
$0 < \alpha\leq 1$.

\begin{theorem}[Adapted Uniform Boundedness Principle, 
see \cite{Goebbels20}]\label{ZUBP}
Let $(E_{n})_{n=1}^\infty$ be a sequence of remainders that map elements of a
real Banach space $X$ to non-negative numbers, i.e.,
$$E_n : X\to
[0,\infty).$$
The sequence has to fulfill conditions
(\ref{mono0})--(\ref{mono}).
Also, a family of sub-linear bounded functionals $S_\delta
\in X^\sim$ for all $\delta>0$ is given. These functionals will represent
moduli of smoothness.
To express convergence rates, let
$$\mu: (0,\infty)\to  (0,\infty) \text{ and } 
\varphi: [1,\infty)\to  (0,\infty)$$ 
be strictly decreasing
with $\lim_{x\to\infty} \varphi(x)=0$. Since remainder functionals $E_n$ are not
required to be sub-linear, $\varphi$ also has to fulfill following condition.
For each $0<\lambda<1$ there has to be
a real number $X_0=X_0(\lambda)\geq \lambda^{-1}$ and
constant $C_\lambda>0$ such that for all $x>X_0$ there holds
\begin{equation}
\varphi(\lambda x)\leq
C_\lambda \varphi(x).\label{faktor}
\end{equation}

If test elements $h_n\in X$ and a number $n_0\in\mathbb{N}$ exist such that for
all $n\in\mathbb{N}$ with $n\geq n_0$ and for all $\delta>0$
\begin{alignat}{1}
 \| h_n\|_X &\leq C_1,
           \label{ZUBP1}\\
  S_\delta(h_n) &\leq C_2 \min \left\{ 1,\frac{\mu(\delta)}{\varphi(n)}
           \right\},
           \label{ZZUBP3}\\
  E_{4n}(h_n) &\geq c_{3} >0,\label{ZUBP3}
\end{alignat}
then for each abstract modulus of smoothness $\omega$ satisfying
(\ref{ABSMOD}) and 
\begin{equation}
  \lim_{\delta\to 0+}\frac{\omega(\delta)}{\delta}=\infty\label{odd}
\end{equation}
a counterexample $f_\omega\in
X$ exists such that 
\begin{alignat*}{1}
 S_\delta(f_\omega) &= {O}\left(\omega(\mu(\delta))\right)\text{ for } \delta\to
 0+
\end{alignat*}
and 
\begin{alignat*}{1}
& E_{n}(f_\omega) \not= o(\omega(\varphi(n)))\text{ for }  n\to\infty\text{,
i.e., }
\limsup_{n\to\infty} \frac{E_{n}(f_\omega)}{\omega(\varphi(n))} > 0.
\end{alignat*}
\end{theorem}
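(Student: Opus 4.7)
My plan is to realize the counterexample as a rapidly sparse series
\[
f_\omega := \sum_{k=1}^\infty a_k\, h_{n_k}, \qquad a_k := \omega(\varphi(n_k)),
\]
with indices $(n_k)$ selected by a nested induction. Having fixed $n_1,\dots,n_{k-1}$, I would choose $n_k$ large enough that three conditions hold simultaneously: (i) $a_k \leq a_{k-1}/2$ (possible because $\varphi(n)\to 0$ and $\omega$ is continuous with $\omega(0)=0$), ensuring absolute convergence of the series in $X$ via $\|h_{n_k}\|_X \leq C_1$ in (\ref{ZUBP1}); (ii) $\omega(\varphi(n_k))/\varphi(n_k) \geq 2\,\omega(\varphi(n_{k-1}))/\varphi(n_{k-1})$, which is possible by (\ref{odd}) since $\omega(\delta)/\delta\to\infty$ as $\delta\to 0+$; and (iii) the past function $F_k^- := \sum_{j<k} a_j h_{n_j}$, fixed before this step, satisfies $E_{n_k}(F_k^-) \leq c_3 a_k/4$. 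A further auxiliary condition, enforced at the next step, will shrink the future tail so that $D_{n_k}\|\sum_{j>k} a_j h_{n_j}\|_X \leq c_3 a_k/4$.

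For the upper bound, fix $\delta>0$ and let $N=N(\delta)$ be the largest index with $\varphi(n_N) \geq \mu(\delta)$. Sub-linearity of $S_\delta$ together with (\ref{ZZUBP3}) gives
\[
S_\delta(f_\omega) \leq C_2\, \mu(\delta) \sum_{k \leq N} \frac{\omega(\varphi(n_k))}{\varphi(n_k)} + C_2 \sum_{k > N} \omega(\varphi(n_k)).
\]
By (ii) the head sum is a geometric progression running backward from $k=N$ with ratio $1/2$, bounded by $2\,\omega(\varphi(n_N))/\varphi(n_N)$; the quasi-monotonicity of $x\mapsto\omega(x)/x$ implied by the subadditivity in (\ref{ABSMOD}), combined with $\varphi(n_N)\geq\mu(\delta)$, bounds this in turn by $4\,\omega(\mu(\delta))/\mu(\delta)$, so the head contributes $O(\omega(\mu(\delta)))$. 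By (i) the tail is geometric with ratio $1/2$, summing to $\leq 2 a_{N+1} \leq 2\,\omega(\mu(\delta))$ because $\varphi(n_{N+1})<\mu(\delta)$. Hence $S_\delta(f_\omega)=O(\omega(\mu(\delta)))$.

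For the lower bound, fix a large $N$ and decompose $a_N h_{n_N} = f_\omega + (-F_N^-) + (-F_N^+)$, where $F_N^\pm$ denote the past and future tails. Truncating $F_N^+$ to a finite sum if needed by the finiteness of the sum in (\ref{mono0}), and passing to the limit via the $X$-Lipschitz continuity of $E_n$ that follows from (\ref{mono0}) with $m=2$ and (\ref{semilin}), I apply (\ref{mono0}) with $m=3$ and $n=n_N$, together with (\ref{mono0b}) and the chain $E_{3n_N}(h_{n_N}) \geq E_{4n_N}(h_{n_N}) \geq c_3$ from (\ref{mono}) and (\ref{ZUBP3}), to obtain
\[
c_3\, a_N \leq E_{n_N}(f_\omega) + E_{n_N}(F_N^-) + E_{n_N}(F_N^+).
\]
By (iii) the past term is $\leq c_3 a_N/4$, and by the tail-control step together with (\ref{semilin}) the future term is $\leq D_{n_N}\|F_N^+\|_X \leq c_3 a_N/4$. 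Therefore $E_{n_N}(f_\omega) \geq c_3 a_N/2 = (c_3/2)\,\omega(\varphi(n_N))$, whence $\limsup_n E_n(f_\omega)/\omega(\varphi(n)) \geq c_3/2 > 0$, which is the conclusion $E_n(f_\omega)\neq o(\omega(\varphi(n)))$.

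The main obstacle is the joint satisfaction of (i)--(iii) and the tail condition during the induction. Conditions (i) and (ii) pull in opposite directions (amplitudes should decay while the ratios $r_k = a_k/\varphi(n_k)$ should grow), but are reconciled precisely by the divergence in (\ref{odd}) and by the freedom of choosing $n_k$ arbitrarily large. Condition (iii) is the subtlest: it is not forced by the abstract axioms (\ref{mono0})--(\ref{mono}) alone, but is available in the concrete approximation setting because $E_n(g)\to 0$ for every fixed $g\in X$ (universal approximation). The factor condition (\ref{faktor}) on $\varphi$ plays its role in translating the lower bound at the sparse subsequence $n=n_N$ into a uniform non-$o$ statement and in ensuring that the manipulations involving the indices $2n_N,3n_N,4n_N$ remain uniform in $N$.
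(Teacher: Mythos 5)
The paper itself does not prove Theorem~\ref{ZUBP}; it imports it from \cite{Goebbels20}, where the proof is the Dickmeis--Nessel--van Wickeren gliding-hump construction $f_\omega=\sum_k\omega(\varphi(n_k))h_{n_k}$ over a rapidly thinned index sequence. Your proposal follows exactly that blueprint, and most of it is sound: the head/tail split for $S_\delta(f_\omega)$ using the backward-geometric growth of $\omega(\varphi(n_k))/\varphi(n_k)$ and the quasi-monotonicity $\omega(s)/s\leq 2\omega(t)/t$ for $t\leq s$ is correct, and the lower bound via (\ref{mono0}) with $m=3$ applied to $a_Nh_{n_N}=f_\omega+(-F_N^-)+(-F_N^+)$, combined with (\ref{mono0b}), (\ref{mono}) and (\ref{ZUBP3}) to get $c_3a_N\leq E_{n_N}(f_\omega)+E_{n_N}(F_N^-)+E_{n_N}(F_N^+)$, is the right way to compensate for the missing sub-linearity of $E_n$. (Your truncation of $F_N^+$ is unnecessary, since (\ref{semilin}) applies directly to the tail as an element of $X$.)

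The genuine gap is your condition (iii), $E_{n_k}(F_k^-)\leq c_3a_k/4$, and you concede it yourself: it is not a consequence of (\ref{mono0})--(\ref{mono}). By (\ref{mono}) the sequence $E_n(F_k^-)$ decreases to some limit $L\geq 0$; if $L>0$, no choice of $n_k$ can force $E_{n_k}(F_k^-)$ below $c_3\omega(\varphi(n_k))/4$, because the right-hand side tends to $0$. Appealing to universal approximation imports a hypothesis that the abstract theorem does not contain, so as written you have proved a weaker statement. The gap is repairable within the stated axioms by a dichotomy: if at some stage the fixed finite sum $g:=F_k^-$ satisfies $\lim_nE_n(g)=L>0$, then $g$ is already the desired counterexample, since sub-linearity of $S_\delta$ and (\ref{ZZUBP3}) give $S_\delta(g)\leq C\mu(\delta)=o(\omega(\mu(\delta)))$ by (\ref{odd}), while $E_n(g)\geq L$ certainly fails to be $o(\omega(\varphi(n)))$ because $\omega(\varphi(n))\to 0$; otherwise $E_n(F_k^-)\to 0$ at every stage and your induction goes through. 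You should either add this case distinction or add the pointwise convergence $\lim_nE_n(h)=0$ (for finite linear combinations of the $h_j$) as an explicit hypothesis. A second, smaller inaccuracy: condition (\ref{faktor}) is not what converts the subsequence bound into the $\limsup$ statement (the $\limsup$ over all $n$ dominates the $\limsup$ over $n=n_N$ automatically); in the paper it is needed when verifying (\ref{ZZUBP3})-type estimates that compare $\varphi(4n)$ with $\varphi(n)$, as in the proof of Theorem~\ref{lpcorsharp}.
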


When dealing with the sup-norm, one can generally apply the resonance theorem
in connection with known VC dimensions of indicator functions.
The general definition of VC dimension based on sets is as
follows.

Let $X$ be a finite set and ${\cal A}\subset {\cal P}(X)$ a family of subsets of
$X$.
Set $S\subset X$ is said to be shattered by ${\cal A}$ iff each subset $B\subset
S$ can be represented as $B=S\cap A$ for a family member $A\in {\cal A}$. Thus,
the set $\{S\cap A : A\in {\cal A}\}$ has $2^{|S|}$ elements, $|S|$
denoting the cardinality $S$.
\begin{alignat*}{1}
\operatorname{VC-dim}({\cal A}) :=  \sup \{&k\in\mathbb{N} : \exists S\subset X
\text{ with cardinality }\\
& |S|=k \text{ such that } S \text{ is shattered
by } {\cal A}\}
\end{alignat*}
is called the VC dimension of ${\cal A}$.

For our purpose, we discuss a (non-linear) set $V$ of functions
$g:X\to\mathbb{R}$ on a set $X\subset\mathbb{R}^m$. Using
Heaviside-function $H:\mathbb{R}\to \{0, 1\}$, 
\begin{alignat*}{1}
H(x) &:= \left\{ \begin{array}{cc}0,& x<0\\ 1,& x\geq
0,\end{array}\right.
\end{alignat*}
let
\begin{alignat*}{1}
 {\cal A} := \{ A\subset X : \exists g\in V : &\, (\forall x\in A:
H(g(x))=1)\, \und\,
 (\forall x\in X\setminus A: H(g(x))=0) \}.
\end{alignat*}
Then one typically defines
$\operatorname{VC-dim}(V):=\operatorname{VC-dim}({\cal A})$.
Thus, $k:=\operatorname{VC-dim}(V)$ is the largest cardinality of a
subset 
$S=\{x_1, \dots, x_k\}\subset X$ 
such that for each sign sequence
$s_1,\dots, s_k\in\{-1, 1\}$ a function $g\in V$ can be found that fulfills (cf.~\cite{Bartlett1996})
$$H(g(x_i)) = H(s_i),\quad 1\leq i\leq k.
$$

\begin{theorem}[Sharpness due to VC Dimension]\label{thVC}
Let 
$(V_n)_{n=1}^\infty$ be a sequence of (non-linear) function spaces $V_n$
of bounded real-valued functions on $[0, 1]^d$ such that 
\begin{equation}
E_n(f):=\inf \{ \| f-g\|_{C([0, 1]^d)} : g\in
V_n\}\label{defen}
\end{equation}
fulfills conditions (\ref{mono0})--(\ref{mono}) on Banach space $C([0, 1]^d)$.
An equidistant grid $X_n\subset [0, 1]^d$ with a step size $\frac{1}{\tau(n)}$, 
$\tau :\mathbb{N} \to \mathbb{N}$, is given via
$$X_{n}:=\left\{
\frac{j}{\tau(n)} :
j\in\{0,1,\dots,\tau(n)\}\right\}\times\dots\times\left\{
\frac{j}{\tau(n)} :
j\in\{0,1,\dots,\tau(n)\}\right\}.$$ 
Let
$$
 V_{n,\tau(n)}  := \{  h:X_n\to\mathbb{R} : \text{ a function } g\in
 V_n \text{ exists with } h(\vx)=g(\vx)
\text{ for all } \vx\in X_n \}
$$
be the set of functions that are generated by restricting functions of $V_n$
to this grid.
As in Theorem \ref{ZUBP}, convergence rates are expressed via a function $\varphi(x)$ 
that fulfills the requirements of Theorem \ref{ZUBP} including condition
(\ref{faktor}). 
Let VC dimension of $V_{n,\tau(n)}$ and function values of $\tau$ and $\varphi$
be coupled via inequalities
\begin{eqnarray}
\operatorname{VC-dim}(V_{n,\tau(n)}) &<& [\tau(n)]^d,\label{VCcond}\\
\tau(4n) &\leq& \frac{C}{\varphi(n)}, \label{alphacond}
\end{eqnarray}
for all $n\geq n_0\in\mathbb{N}$ with a constant $C>0$ that is independent of
$n$.

Then, for $r\in\mathbb{N}$ 
and each abstract modulus of smoothness $\omega$ satisfying 
(\ref{ABSMOD}) and (\ref{odd}), there exists a counterexample $f_{\omega}\in
C([0,1]^d)$ such that for $\delta\to 0+$ and $n\to\infty$
$$ \omega_r(f_{\omega}, \delta)_{\infty,(0,1)^d} =
{O}\left(\omega(\delta^r)\right)
\text{ and }
E_n(f_{\omega}) \neq
o\left(\omega\left([\varphi(n)]^r\right)\right).
$$
\end{theorem}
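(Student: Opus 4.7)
The strategy is to invoke the Adapted Uniform Boundedness Principle (Theorem \ref{ZUBP}) with the choices
$S_\delta(f) := \omega_r(f,\delta)_{\infty,(0,1)^d}$,  $\mu(\delta):=\delta^r$, and rescaled rate $\tilde\varphi(n):=[\varphi(n)]^r$. Since $\varphi$ satisfies (\ref{faktor}) with constant $C_\lambda$, the rate $\tilde\varphi$ inherits (\ref{faktor}) with $\tilde C_\lambda := C_\lambda^r$, so Theorem \ref{ZUBP} applies. Its conclusion, translated back, reads exactly $\omega_r(f_\omega,\delta)=O(\omega(\delta^r))$ and $E_n(f_\omega)\neq o(\omega([\varphi(n)]^r))$, as claimed. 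Hence everything reduces to exhibiting test elements $h_n\in C([0,1]^d)$ verifying (\ref{ZUBP1}), (\ref{ZZUBP3}), and (\ref{ZUBP3}).

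The construction of $h_n$ exploits the VC-dimension hypothesis at parameter $4n$. For $n\geq n_0$ the grid $X_{4n}$ contains $(\tau(4n)+1)^d>[\tau(4n)]^d$ points, so we can select a subset $S=\{y_1,\dots,y_K\}\subset X_{4n}$ of cardinality $K:=[\tau(4n)]^d$. By (\ref{VCcond}), $\operatorname{VC-dim}(V_{4n,\tau(4n)})<K$, so $S$ is not shattered: there exists a sign pattern $(s_1,\dots,s_K)\in\{-1,1\}^K$ such that for every $g\in V_{4n}$ at least one index $i$ satisfies $H(g(y_i))\neq H(s_i)$. Now fix once and for all a nonnegative $C^\infty$ bump $\psi:\mathbb{R}^d\to[0,1]$ with $\psi(0)=1$ and $\mathrm{supp}\,\psi\subset\{\vx:|\vx|\leq 1/3\}$. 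Scale it to the grid via $\phi_i(\vx):=\psi(\tau(4n)(\vx-y_i))$; since grid spacing equals $1/\tau(4n)$ while each $\phi_i$ is supported in the ball of radius $1/(3\tau(4n))$ about $y_i$, the supports are pairwise disjoint. Set
\[
h_n(\vx):=\sum_{i=1}^K s_i\,\phi_i(\vx).
\]

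The three ZUBP conditions are verified as follows. (i) Disjoint supports give $\|h_n\|_{C([0,1]^d)}\leq \|\psi\|_\infty\leq 1$, i.e.\ (\ref{ZUBP1}). (ii) A chain-rule bound yields $\|\partial^\alpha h_n\|_\infty\leq \tau(4n)^{|\alpha|}\|\partial^\alpha\psi\|_\infty$ for every multi-index $\alpha$; inserting $|\alpha|=r$ into (\ref{gegenAbl}) and combining with (\ref{alphacond}) gives
\[
\omega_r(h_n,\delta)_{\infty,(0,1)^d}\leq C_r\,\delta^r\,\tau(4n)^r\max_{|\alpha|=r}\|\partial^\alpha\psi\|_\infty \leq C'\,\frac{\delta^r}{[\varphi(n)]^r}=C'\,\frac{\mu(\delta)}{\tilde\varphi(n)}.
\]
Merging this with the trivial bound $\omega_r(h_n,\delta)\leq 2^r\|h_n\|_\infty\leq 2^r$ yields (\ref{ZZUBP3}). (iii) For any $g\in V_{4n}$ there is an index $i$ with $H(g(y_i))\neq H(s_i)=H(h_n(y_i))$; hence $g(y_i)$ and $h_n(y_i)=s_i$ lie strictly on opposite sides of $0$ with $|h_n(y_i)|=1$, so $|g(y_i)-h_n(y_i)|\geq 1$, and therefore $E_{4n}(h_n)\geq 1>0$, giving (\ref{ZUBP3}).

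Theorem \ref{ZUBP} now delivers the desired $f_\omega$. The main conceptual hurdle is step (ii): one must ensure that the scaled bumps are simultaneously bounded (demanding disjoint supports, hence a spacing condition matching $1/\tau(4n)$), smooth enough to reach order $r$ (demanding $\psi\in C^r$, easily handled by choosing $\psi\in C^\infty$), and have derivatives whose size $\tau(4n)^r$ is quantitatively tied to $[\varphi(n)]^{-r}$ via (\ref{alphacond}). Everything else consists of routine estimates combined with the purely non-linear resonance step supplied by Theorem \ref{ZUBP}.
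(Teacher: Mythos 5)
Your proposal is correct and follows essentially the same route as the paper: use the VC-dimension bound at index $4n$ to obtain a non-realizable sign pattern on the grid, build the resonance element $h_n$ as a sum of disjointly supported scaled $C^\infty$ bumps carrying those signs, verify (\ref{ZUBP1})--(\ref{ZUBP3}) via the sup-norm, the derivative bound $\tau(4n)^{r}\lesssim[\varphi(n)]^{-r}$ from (\ref{alphacond}) together with (\ref{gegenAbl}), and the Heaviside mismatch giving $E_{4n}(h_n)\geq 1$, then apply Theorem \ref{ZUBP} with $\mu(\delta)=\delta^r$ and $[\varphi(n)]^r$. The only (immaterial) differences are that you use a radial bump and restrict to a subset of the grid of cardinality $[\tau(4n)]^d$, whereas the paper uses tensor-product bumps on the full grid.
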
 
\begin{proof}
Condition (\ref{VCcond}) implies for $4n\geq n_0$ that a sequence of signs 
$s_{\vz}\in\{-1, 1\}$ for points $\vz\in
X_{4n}$ exists such that no function in $V_{4n}$ can reproduce the sign of the
sequence in each point of $X_{4n}$, i.e.,
for each $g\in V_{4n}$ there exists a point $\vz_0
\in X_{4n}$ such that 
$$H(g(\vz_0)) \neq H(s_{\vz_0}).$$

Based on this sign sequence, we construct an arbitrarily often
partially differentiable resonance function $h_n$ such that its function values
equal the signs on the grid $X_{4n}$.
To this end, we use the arbitrarily often differentiable function
\begin{equation*}
 h(x) := \left\{ \begin{array}{ll}
\exp\left(1-\frac{1}{1-x^2}\right) & \text{for } |x|<1,\\
0 & \text{for } |x|\geq 1,
\end{array}\right.
\end{equation*}
with properties $h(0)=1$ and
$\|h\|_{B(\mathbb{R})}=1$.
Based on $h$, we define $h_n$:
$$ h_n(\vx):= \sum_{\vz\in
X_{4n}} s_{\vz} \cdot
\prod_{k=1}^d h\left( 2\cdot\tau(4n)\cdot\left(\vx_k-\vz_k\right)\right).$$ 
Scaling factors $2\cdot\tau(4n)$ are chosen such that supports of
summands only intersect at their borders. Therefore,
$\|h_n\|_{C([0,1]^d)}\leq 1$ and $h_n(\vz)=s_{\vz}$ for all $\vz\in
X_{4n}$.

All partial derivatives of order up to $r$ are in
$O([\varphi(n)]^{-r})$ because of (\ref{alphacond}).
Additionally to $h_n$, we choose parameters in Theorem \ref{ZUBP} as
follows:
\begin{alignat*}{1}
&X=C([0, 1]^d)\text{, }
S_\delta(f) := \omega_r(f, \delta)_{\infty,(0,1)^d}\text{, }
\mu(\delta):=\delta^r\text{,}
\end{alignat*}
and $E_n(f)$ as in (\ref{defen}). 
We do not directly use $\varphi(x)$ with Theorem \ref{ZUBP}. Instead, 
function $[\varphi(x)]^r$ fulfills the requirements of the function
also called $\varphi(x)$ in Theorem \ref{ZUBP}.

Requirements (\ref{ZUBP1}) and (\ref{ZZUBP3}) can be easily shown 
due to the sup-norms of $h_n$ and its partial derivatives, cf.\
(\ref{gegenAbl}).

Resonance condition (\ref{ZUBP3}) is fulfilled due to the definition of $h_n$:
For each $g\in V_{4n}$ there exits at least one point $\vz_0\in
X_{4n}$ such that 
$$H(g(\vz_0))\neq H(s_{\vz_0})=H(h_n(\vz_0)).$$
Function $h_n$ is defined to fulfill $|h_n(\vz_0)|=|s_{\vz_0}|=1$. Thus,
$$\|h_n-g\|_{C([0,1]^d)} \geq |h_n(\vz_0)-g(\vz_0)| \geq 1,$$ 
and $E_{4n} h_n
\geq 1$.

All preliminaries of Theorem \ref{ZUBP} are fulfilled such that counterexamples
exist as stated.
\end{proof}

\begin{theorem}[Sharpness for Logistic
Function Approximation in Sup-Norm]\label{corsharp} Let $\sigma$ be the
logistic function and $r\in\mathbb{N}$.
For each abstract modulus of smoothness $\omega$ satisfying 
(\ref{ABSMOD}) and (\ref{odd}),
a counterexample
$f_{\omega}\in C([0,1]^d)$ exists such that for $\delta\to 0+$
$$ \omega_r(f_{\omega}, \delta)_{\infty,(0,1)^d} =
{O}\left(\omega(\delta^{r})\right)$$ and 
for $n\to \infty$
$$E({\cal M}_n,
f_{\omega})_{\infty, (0,1)^d} \neq o\left(\omega\left(\frac{1}{(n
[1+\log_2(n)])^{\frac{r}{d}}}\right)\right).
$$
\end{theorem}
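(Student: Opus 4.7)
The plan is to specialize Theorem \ref{thVC} to $V_n := {\cal M}_n$ restricted to $[0,1]^d$ with $\varphi(n) := (n[1+\log_2 n])^{-1/d}$ and $\mu(\delta) := \delta^r$. First I would verify the four structural axioms (\ref{mono0})--(\ref{mono}) for $E_n(f) := E({\cal M}_n,f)_{\infty,(0,1)^d}$ on the Banach space $X = C([0,1]^d)$: sub-additivity on the scaled index $mn$ uses $\sum_{k=1}^m {\cal M}_n \subset {\cal M}_{mn}$; positive homogeneity uses the ridge structure of ${\cal M}_n$; the trivial bound $E_n(f) \leq \|f\|_{C([0,1]^d)}$ holds because $0\in {\cal M}_n$; and monotonicity follows from ${\cal M}_n \subset {\cal M}_{n+1}$. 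Since $\sigma$ is bounded, every element of $V_n$ is a bounded function, as required by Theorem \ref{thVC}.

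The heart of the argument is the VC dimension bound for single hidden layer networks with logistic activation from \cite{Bartlett1996}: there is a constant $C_1>0$ (possibly depending on $d$) such that $\operatorname{VC-dim}({\cal M}_n) \leq C_1\, n\,(1+\log_2 n)$. Restriction to a finite sample cannot raise the VC dimension, so the same bound is inherited by $V_{n,\tau(n)}$. I would then choose the grid step $\tau(n) := \lceil (2C_1\, n[1+\log_2 n])^{1/d}\rceil$. This yields $[\tau(n)]^d \geq 2C_1\, n[1+\log_2 n] > \operatorname{VC-dim}(V_{n,\tau(n)})$, establishing (\ref{VCcond}), and simultaneously $\tau(4n) \leq C_2 (n[1+\log_2 n])^{1/d} = C_2/\varphi(n)$, establishing (\ref{alphacond}).

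Before invoking Theorem \ref{thVC} I would check the dilation condition (\ref{faktor}) for this $\varphi$: for fixed $0<\lambda<1$, the ratio $\varphi(\lambda x)/\varphi(x)$ tends to $\lambda^{-1/d}$ as $x\to\infty$, since $[1+\log_2(\lambda x)]/[1+\log_2 x] \to 1$, so $\varphi(\lambda x) \leq C_\lambda \varphi(x)$ eventually. Applying Theorem \ref{thVC} with the above data then produces a counterexample $f_\omega \in C([0,1]^d)$ satisfying $\omega_r(f_\omega,\delta)_{\infty,(0,1)^d} = O(\omega(\mu(\delta))) = O(\omega(\delta^r))$ and $E({\cal M}_n, f_\omega)_{\infty,(0,1)^d} \neq o(\omega([\varphi(n)]^r)) = o(\omega((n[1+\log_2 n])^{-r/d}))$, which is precisely the claim.

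The only non-routine ingredient is the near-linear VC dimension bound for sigmoid networks cited from \cite{Bartlett1996}; the rest is bookkeeping that mirrors the proof template of Theorem \ref{thVC}. In spirit the result recycles the Maiorov--Meir resonance sequence underlying (\ref{eqPink}), but here the uniform boundedness machinery of Theorem \ref{ZUBP}, coupled with the smooth ``tent'' functions $h_n$ built inside the proof of Theorem \ref{thVC}, condenses that sequence into one fixed function $f_\omega$ that witnesses the sharpness simultaneously for all admissible moduli $\omega$.
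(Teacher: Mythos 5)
Your overall strategy is exactly the paper's: verify (\ref{mono0})--(\ref{mono}) for $E_n = E({\cal M}_n,\cdot)_{\infty,(0,1)^d}$, invoke the VC dimension bound of \cite{Bartlett1996}, choose $\tau(n) \asymp (n[1+\log_2 n])^{1/d}$, check (\ref{VCcond}), (\ref{alphacond}) and (\ref{faktor}) for $\varphi(x) = (x[1+\log_2 x])^{-1/d}$, and feed everything into Theorem \ref{thVC}. However, there is one genuine gap in your treatment of the key ingredient. You assert a domain-independent bound $\operatorname{VC-dim}({\cal M}_n) \leq C_1\, n\,(1+\log_2 n)$ on the continuous cube and then argue that restriction to the grid can only decrease it. That is not what \cite{Bartlett1996} provides: Theorem 2 there bounds the VC dimension of networks on the \emph{discrete} domain $\{-D,\dots,D\}^d$ by $2(nd+2n+1)\log_2(24e(nd+2n+1)D)$, i.e.\ the bound carries a $\log_2 D$ term and so depends on the grid resolution you are trying to choose. (For logistic networks on a continuum no near-linear VC bound is known; the available continuous-input upper bounds are quadratic in the number of parameters, so your stated premise cannot simply be cited.) This creates a self-consistency issue your write-up skips: with $\tau(n) = \lceil(2C_1 n[1+\log_2 n])^{1/d}\rceil$ the inherited bound is $C_d\, n[\log_2 n + \log_2 \tau(n)]$, and one must check that this is still strictly below $[\tau(n)]^d$. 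The paper does precisely this: it sets $D(n) = \lfloor(E n(1+\log_2 n))^{1/d}\rfloor$, notes $\log_2 D(n) = O(\log_2 n)$, and chooses the constant $E$ via (\ref{abE}) so that $C_d n[\log_2 n + \log_2 D(n)] < [D(n)]^d$. Your argument closes the same way once you substitute $\tau(n)$ back into the $\log_2 D$ term, so the gap is repairable, but as written the central inequality (\ref{VCcond}) rests on an unsupported claim.

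Two minor points. The paper also has to discard the constant offset $a_0$ present in Bartlett's class $\Delta_n$ (it only enlarges the class, so the upper bound survives) and to transport the grid from $[-D,D]^d$ to $[0,1]^d$ by an affine map; these are routine but worth a sentence. Finally, your closing remark that the construction ``recycles the Maiorov--Meir resonance sequence'' mischaracterizes this route: the resonance elements here are the smooth bump sums $h_n$ built inside the proof of Theorem \ref{thVC} from a non-realizable sign pattern, and the Maiorov--Meir sequence (\ref{eqPink}) is used only in the alternative $L^p$ argument of Theorem \ref{lpcorsharp}.
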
 

For univariate approximation, i.e., $d=1$, the theorem is proved in
\cite{Goebbels20}. This proof can be generalized as follows.

\begin{proof}
Let $D\in\mathbb{N}$. In \cite{Bartlett1996}, an upper bound for the VC
dimension of function spaces
\begin{eqnarray*}
 \Delta_n &:=&\big\{g : \{-D,-D+1,\dots, D\}^d \to\mathbb{R} :\\
&&\qquad g(x)=a_0+ \sum_{k=1}^{n} a_k \sigma(\vw_k \cdot\vx+c_k)\text{, }
a_0,a_k,c_k\in\mathbb{R}, \vw_k\in\mathbb{R}^d\big\}
\end{eqnarray*}
is derived.
Functions are defined on a discrete set with $(2D+1)^d$ points. Please note
that the constant function $a_0$ is not consistent with the
definition of ${\cal M}_n$. It provides an additional degree of freedom.

We apply Theorem 2 in \cite{Bartlett1996}: There exists $n^\ast\in\mathbb{N}$ 
such that for all $n\geq n^\ast$ the VC
dimension of $\Delta_n$ is upper bounded by
$$2 \cdot (nd +2n+1) \cdot
\log_2 (24e(nd+2n+1)D),$$
i.e., there exists an $n_0\geq \max\{2, n^\ast\}$, $n_0\in\mathbb{N}$, and a
constant $C_d>0$, dependent on $d$, such that for all $n\geq n_0$ 
$$\operatorname{VC-dim}(\Delta_n) \leq C_d n[\log_2(n) +
\log_2(D)].
$$

Let constant $E>1$ be chosen such that 
\begin{alignat}{1}
&\frac{1+\log_2(E)}{E} < \frac{1}{4C_d}\text{, i.e, }
 4C_d [1+\log_2(E)] < E.
\label{abE}
\end{alignat}
This is possible because 
$$\lim_{E\to\infty} \frac{1+\log_2(E)}{E}=0.$$
Now we choose a suitable value of $D=D(n)$ such that the VC dimension of
$\Delta_n$ is less than $[D(n)]^d$. To this end, let
$$D=D(n):= \left\lfloor \root d
\of {En(1+\log_2(n))}\right\rfloor.$$ 
Then we get for $n\geq n_0$ with
(\ref{abE}):
\begin{eqnarray*}
\operatorname{VC-dim}(\Delta_n) &\leq&
C_d n[\log_2(n) + \log_2(\root d \of {E n(1+\log_2(n))} )]\\
&=& C_d n[\log_2(n) + d^{-1} \log_2(E n(1+\log_2(n)) )]\\
&\leq&
C_d n[2\log_2(n) + \log_2(E) +\log_2(2\log_2(n)) )]\\
&\leq&
C_d n[3\log_2(n) + \log_2(E) + 1  )]
\leq 4C_d n\log_2(n)[1+\log_2(E)]\\
&<& E n\log_2(n) \leq \left\lfloor
\root d \of { E n(1+\log_2(n))} \right\rfloor^d = [D(n)]^d.
\end{eqnarray*}

One can map interval $[-D, D]^d$ to $[0,
1]^d$ with an affine transform. By also omitting constant
$a_0$, we estimate the VC dimension of $V_{n,\tau(n)}$ with
parameters $V_n:={\cal M}_n$ and $\tau(n):=2D(n):$ 
$$
\operatorname{VC-dim}(V_{n,\tau(n)}) < [D(n)]^d < [2D(n)]^d = [\tau(n)]^d.
$$
Thus, (\ref{VCcond}) is fulfilled.
Conditions (\ref{mono0})--(\ref{mono}) are chosen such that they fit with error functionals 
$$E_n:=E({\cal
M}_n, \cdot)_{\infty, (0,1)^d}.$$

For strictly decreasing function
$$\varphi(x):=1/\root d \of {x[1+\log_2(x)]}$$ 
conditions
$\lim_{x\to\infty} \varphi(x) = 0$ and (\ref{faktor}) hold. Latter can be shown
for $x>X_0(\lambda):=\lambda^{-2}$ because
$\log_2(\lambda) > -\log_2(x)/2$ and
\begin{alignat*}{1}
\varphi(\lambda x) &=\frac{1}{\root d \of \lambda}
\frac{1}{\root d \of {x(1+\log_2(x)+\log_2(\lambda))}}
\leq
\frac{1}{\root d \of \lambda} \frac{1}{\root d \of {x(1+\frac{1}{2}\log_2(x))}}
< \root d \of {\frac{2}{\lambda}} \varphi(x).
\end{alignat*}

Finally, (\ref{alphacond}) follows from  
\begin{alignat*}{1}
\tau(4n) &= 2D(4n) \leq
2\root d \of {E4n(1+\log_2(4n))}
< 
\frac{2\root d\of {4E(1+\log_2(4))}}{\varphi(n)}
=\frac{2\root d
\of {12E}}{\varphi(n)}.
\end{alignat*}
Thus, Theorem \ref{thVC} can be applied to obtain the counterexample.
\end{proof}

The theorem can also be proved based on the sequence $(f_n)_{n=1}^\infty$
from \cite{Maiorov99} with properties (\ref{eqPink}) and (\ref{boundedfn}).
We use this sequence to obtain the sharpness in $L^p$ norms for approximation
with piecewise polynomial activation functions as well as with the
logistic function.

Theorem 1 in \cite{Ratsaby99} provides a general means to obtain such bounded
sequences in Sobolev spaces for which approximation by functions in ${\cal M}_n$ is lower
bounded with respect to pseudo-dimension. 

We condense sequence
$(f_n)_{n=1}^\infty$ to a single counterexample with the next theorem.

\begin{theorem}[Sharpness with $L^p$-Norms]\label{lpcorsharp} 
Let $\sigma$ be either the
logistic function or a piecewise polynomial activation function of type (\ref{defrelu})
and $r\in\mathbb{N}$. Let $\Omega$ be the $d$-dimensional unit ball,
$d\in\mathbb{N}$, $1\leq p<\infty$. 
For each abstract modulus of smoothness $\omega$ satisfying 
(\ref{ABSMOD}) and (\ref{odd}),
a counterexample
$f_{\omega}\in L^p(\Omega)$ exists such that for $\delta\to 0+$
$$ \omega_r(f_{\omega}, \delta)_{p, \Omega} =
{O}\left(\omega(\delta^{r})\right)$$ and 
for $n\to \infty$
$$E({\cal M}_n,
f_{\omega})_{p, \Omega} \neq o\left(\omega\left(\frac{1}{(n
[1+\log_2(n)])^{\frac{r}{d}}}\right)\right).
$$
\end{theorem}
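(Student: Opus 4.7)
The plan is to apply the adapted uniform boundedness principle, Theorem \ref{ZUBP}, to condense the Maiorov--Meir sequence $(f_k)_{k\ge 2}$ into a single counterexample for both activation function classes simultaneously. Recall that this sequence satisfies the lower bound \eqref{eqPink}, the $L^p$-smallness \eqref{boundedfn}, and a uniform Sobolev bound $\|f_k\|_{W_p^r(\Omega)} \le C_0$. I would work in $X = L^p(\Omega)$ with $E_n(f) := E(\mathcal{M}_n, f)_{p,\Omega}$ and $S_\delta(f) := \omega_r(f,\delta)_{p,\Omega}$, and set $\mu(\delta) := \delta^r$ together with $\varphi(n) := (n[1+\log_2 n])^{-r/d}$.

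First I would verify the structural hypotheses. Conditions \eqref{mono0}--\eqref{mono} for $E_n$ are immediate: a sum of $m\cdot n$ ridge functions splits into $m$ groups of $n$, scalar homogeneity is trivial, and the nesting $\mathcal{M}_n\subset\mathcal{M}_{n+1}$ gives monotonicity. The functional $S_\delta$ lies in $X^\sim$ by sub-additivity, absolute homogeneity, and the bound $\omega_r(f,\delta)_{p,\Omega} \le 2^r\|f\|_{L^p(\Omega)}$. Condition \eqref{faktor} for $\varphi$ follows as in the proof of Theorem \ref{corsharp}: for $0<\lambda<1$ and $x>\lambda^{-2}$ one has $\log_2(\lambda x) > \tfrac{1}{2}\log_2 x - 1$, so $\varphi(\lambda x) \le (2/\lambda)^{r/d}\varphi(x)$.

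The crux is the choice of the resonance elements. I would set $h_n := f_{4n}/\varphi(4n)$, where $f_{4n}$ is the Maiorov--Meir function indexed by $4n$. The bound \eqref{boundedfn} gives $\|h_n\|_{L^p(\Omega)} \le 2^{r/d}C$, so \eqref{ZUBP1} holds. The resonance \eqref{ZUBP3} follows from \eqref{eqPink} and the homogeneity \eqref{mono0b}: since $\log_2(4n) \le 1+\log_2(4n)$,
\[
E_{4n}(h_n) = \frac{E(\mathcal{M}_{4n},f_{4n})_{p,\Omega}}{\varphi(4n)} \ge c\left(\frac{1+\log_2(4n)}{\log_2(4n)}\right)^{r/d} \ge c > 0.
\]
For the smoothness estimate \eqref{ZZUBP3}, I would combine two bounds. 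By \eqref{gegenAbl} and the uniform bound $\|f_{4n}\|_{W_p^r(\Omega)}\le C_0$, $\omega_r(f_{4n},\delta)_{p,\Omega} \le C\delta^r$, hence $\omega_r(h_n,\delta)_{p,\Omega} \le C\delta^r/\varphi(4n)$. The quotient $\varphi(n)/\varphi(4n) = 4^{r/d}((1+\log_2(4n))/(1+\log_2 n))^{r/d}$ is uniformly bounded in $n$ (tending to $4^{r/d}$), so this yields $\omega_r(h_n,\delta)_{p,\Omega} \le C'\delta^r/\varphi(n)$. Simultaneously, $\omega_r(h_n,\delta)_{p,\Omega} \le 2^r\|h_n\|_{L^p(\Omega)}$ is a constant independent of $\delta$ and $n$. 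Together these give \eqref{ZZUBP3}.

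With all hypotheses verified, Theorem \ref{ZUBP} produces the desired $f_\omega \in L^p(\Omega)$. The main obstacle is orchestrating the rescaling: the Maiorov--Meir sequence has $L^p$-norm shrinking at the same rate as the lower bound for the best approximation error, so we must divide by $\varphi(4n)$ to obtain unit-order resonance together with uniformly bounded test functions; the price is an inflated Sobolev semi-norm $\|h_n\|_{W_p^r(\Omega)} = O(1/\varphi(4n))$, which remains compatible with the required smoothness estimate only because the index shift $n\mapsto 4n$ changes $\varphi$ by a bounded factor.
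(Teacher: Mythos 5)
Your proposal is correct and follows essentially the same route as the paper's own proof: the identical choice of $X$, $E_n$, $S_\delta$, $\mu$, $\varphi$, the same resonance elements $h_n=f_{4n}/\varphi(4n)$ built from the Maiorov--Meir sequence, and the same verifications of (\ref{ZUBP1}), (\ref{ZZUBP3}) via (\ref{gegenAbl}), and (\ref{ZUBP3}) via (\ref{eqPink}). The extra details you supply (explicit check of (\ref{faktor}) and of the boundedness of $\varphi(n)/\varphi(4n)$) match what the paper delegates to the proof of Theorem \ref{corsharp}.
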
 

\begin{proof}
We apply Theorem \ref{ZUBP} with following parameters for $n\geq 2$:
\begin{alignat*}{1}
&E_n(f):=E({\cal M}_n,
f)_{p, \Omega}, \, X=L^p(\Omega),\, S_\delta(f)=\omega_r(f,
\delta)_{p, \Omega},\\
& \varphi(x)=\frac{1}{[x(1+\log_2(x))]^\frac{r}{d}}, \quad \mu(\delta)=\delta^r. 
\end{alignat*}
Function $\varphi(x)$ satisfies
the prerequisites of Theorem \ref{ZUBP} similarly to the proof of Theorem
\ref{corsharp}.
Also, conditions
(\ref{mono0})--(\ref{mono}) hold true for $E_n$. 
For $r\in\mathbb{N}$, we use the sequence
$(f_n)_{n=2}^\infty$ of (\ref{eqPink}) to define resonance elements $$ h_n :=
\frac{1}{\varphi(4n)} \cdot f_{4n} $$
such that functions $h_n$ are uniformly bounded in $L^p(\Omega)$ due to
(\ref{boundedfn}) in $L^p(\Omega)$.
Thus, (\ref{ZUBP1}) is fulfilled.
From (\ref{eqPink}) we obtain resonance condition (\ref{ZUBP3})
\begin{alignat*}{1}
 E({\cal M}_{4n},
h_n)_{p, \Omega} &\geq 
[4n(1+\log_2(4n))]^\frac{r}{d} \cdot
\frac{c}{(4n(\log_2(4n))^{\frac{r}{d}}}
 \geq c > 0.
\end{alignat*}
Since $(\|f_n\|_{W_p^{r}(\Omega)})_{n=2}^\infty$ is bounded,
estimate (\ref{gegenAbl}) yields (\ref{ZZUBP3}):
\begin{alignat*}{1}
\omega_r(h_n, \delta)_{p, \Omega} &= \frac{1}{\varphi(4n)} \omega_r(f_{4n},
\delta)_{p, \Omega}
 \leq C \frac{\delta^r}{\varphi(4n)} 
\leq 12^\frac{r}{d} C \frac{\mu(\delta)}{\varphi(n)}. 
\end{alignat*}
Thus, all prerequisites of Theorem \ref{ZUBP} are fulfilled such that
counterexamples exist as stated.
\end{proof}

With respect to error bound (\ref{directsync}) for
synchronous approximation, counterexamples can be obtained due to the following
observation for $\alpha\in\mathbb{N}_0^d$, $|\alpha|=k$:
\begin{alignat*}{1}
\inf&\left\{
 \left\| \frac{\partial^{|\alpha|} (f(\vx)-g(\vx))}{\partial
 x_1^{\alpha_1}\dots\partial x_d^{\alpha_d}} 
 \right\|_{X^p(\Omega)}\!\!\!\! :
 g\in{\cal M}_{n,\sigma}\right\}
\geq E\left({\cal M}_{n,\sigma^{(k)}},
\frac{\partial^{|\alpha|} f(\vx)}{\partial
 x_1^{\alpha_1}\dots\partial x_d^{\alpha_d}} \right)_{p, \Omega}. 
\end{alignat*}
In the univariate case $d=1$, a counterexample for approximation with
$\sigma^{(k)}$ can be integrated to become a counterexample that shows
sharpness of (\ref{directsync}). For example,
$\sigma(x)=\frac{1}{2}+\frac{1}{\pi} \arctan(x)$ is discussed in
\cite[Corollary 4.2]{Goebbels20}. The given proof shows that for each abstract
modulus of smoothness $\omega$ satisfying (\ref{ABSMOD}) and (\ref{odd}),
a continuous counterexample $f_\omega'$ exists such that $\omega_1(f_{\omega}', \delta)_{\infty, \Omega} =
{O}\left(\omega(\delta)\right)$ and $E({\cal M}_{n,\sigma'}
f_\omega')_{\infty, \Omega} \neq o\left(\omega\left(\frac{1}{n}\right)\right)$.
Thus, one can choose $f_\omega(x):=\int_0^x f'_\omega(t)\,dt$.
In the multivariate case however, integration with respect to
one variable does not lead to sufficient smoothness with regard to other
variables.


\section{Conclusions}
By setting $\omega(\delta):=\delta^{\alpha\frac{d}{r}}$, 
we have shown the following for the logistic
function.
For each $0<\alpha<\frac{r}{d}$ condition (\ref{odd}) is fulfilled, and
according to Theorem \ref{corsharp} there exists a counterexample
$f_{\omega}\in C([0,1]^d)$ with $$ \omega_r(f_{\omega}, \delta)_{\infty,(0,1)^d} =
{O}\left(\delta^{d \alpha}\right)
\text{ and }
E({\cal M}_n,
f_{\omega})_{\infty, (0,1)^d}
=O\left(\frac{1}{n^\alpha}\right) $$
such that for all $\beta>\alpha$
$$E({\cal M}_n,
f_{\omega})_{\infty, (0,1)^d} \neq
O\left(\frac{1}{n^\beta}\right)
\text{ because } 
\frac{1}{n^\beta} = o\left(\frac{1}{(n[1+\log_2(n)])^\alpha}\right).$$
 
With Theorem \ref{lpcorsharp}, similar $L^p$ estimates for the logistic
function and $L^2$ estimates for piecewise polynomial activation functions
(\ref{defrelu}) hold true, see direct $L^2$-norm estimate (\ref{cutest}).
With one input node ($d=1$), a lower estimate for piecewise polynomial
activation functions without the $\log$-factor can be proved easily, see
\cite{Goebbels20}. Thus, the bound in Theorem \ref{lpcorsharp} might be
improvable.

Future work can deal
with sharpness of error bound (\ref{directsync}) for
synchronous approximation in the multivariate case. By extending quantitative uniform boundedness principles
with multiple error functionals (cf.~\cite{Dickmeis}, \cite{Imhof1},
\cite{Imhof2}) to non-linear approximation (cf.~proof of Theorem \ref{ZUBP} in
\cite{Goebbels20}), one might be able to show simultaneous sharpness in
different (semi-) norms like this conjecture: Under the preliminaries
of Theorem 2, the following might hold true for the logistic activation
function:
For each abstract modulus of continuity $\omega$ fulfilling (25) there exists a $k$-times continuously
differentiable counterexample $f_\omega$ such that for each
$r\in\{0,\dots,k\}$ it simultaneously fulfills ($\delta\to 0+$,
$n\to\infty$)
\begin{alignat*}{1}
&\omega_1\left(\frac{\partial^{k} f_\omega}{\partial x_1^{\beta_1}\dots\partial
x_d^{\beta_d}},\delta\right)_{\infty,\Omega} =
O\left(\omega\left(\delta\right)\right)\text{ for all }
\beta\in\mathbb{N}_0^d,\, |\beta| = k,\\
&\max_{\alpha\in\mathbb{N}_0^d,\, |\alpha|=r}
\inf\left\{
 \left\| \frac{\partial^{|\alpha|} (f_\omega(\vx)-g(\vx))}{\partial
 x_1^{\alpha_1}\dots\partial x_d^{\alpha_d}} 
 \right\|_{C(\overline{\Omega})} :
 g\in{\cal M}_n\right\}\\ 
 &\qquad \neq
 o\left(\frac{1}{(n(1+\log_2(n))^{\frac{k-r}{d}}}\cdot\omega\left(\frac{1}{\root
 d \of {n(1+\log_2(n))}}\right)\right).
\end{alignat*}
\bibliographystyle{spmpsci}
\bibliography{neural}

\end{document}